\documentclass[11pt]{article}

\usepackage{amsfonts}
\usepackage{amsmath}
\usepackage{color}

\textheight 205 true mm \textwidth  150 true mm \oddsidemargin
2.5true mm \evensidemargin 2.5 true mm

%
%

\newcommand{\beq}{\begin{equation}}
\newcommand{\eeq}{\end{equation}}
\newcommand{\bea}{\begin{eqnarray}}
\newcommand{\eea}{\end{eqnarray}}
\newcommand{\beas}{\begin{eqnarray*}}
\newcommand{\eeas}{\end{eqnarray*}}

%
%
\newtheorem{theorem}{Theorem}[section]

\newtheorem{proposition}[theorem]{Proposition}
\newtheorem{prop}[theorem]{Proposition}

\newtheorem{lemma}[theorem]{Lemma}
\newtheorem{remark}[theorem]{Remark}
\newtheorem{example}[theorem]{Example}
\newtheorem{examples}[theorem]{Examples}
\newtheorem{foo}[theorem]{Remarks}

%
%
\newenvironment{proof}{\addvspace{\medskipamount}\par\noindent{\it
Proof}.}
{\unskip\nobreak\hfill$\Box$\par\addvspace{\medskipamount}}









\newcommand{\bH}{\mathbb H}
\newcommand{\bS}{\mathbb S}

\newcommand{\R}{\mathbb R}

\title{The subelliptic heat kernel on the anti-de Sitter space}
\author{Jing Wang\footnote{wangj@ima.umn.edu} }

\date{IMA, University of Minnesota\\
Minneapolis, MN, USA}




\begin{document}
\maketitle

\begin{abstract}
We study the subelliptic heat kernel of the sub-Laplacian on a $2n+1$-dimensional anti-de Sitter space $\bH^{2n+1}$ which  also appears as a model space of a CR Sasakian manifold with constant negative sectional curvature. In particular we obtain an explicit and geometrically meaningful formula for the subelliptic heat kernel. The key idea is to work in a set of coordinates  that reflects the symmetry coming from the Hopf fibration  $\mathbb{S}^1\to \bH^{2n+1}$. A direct application is obtaining small time asymptotics of the subelliptic heat kernel. Also we derive an explicit formula for the sub-Riemannian distance on $\bH^{2n+1}$.
\end{abstract}

\tableofcontents
\clearpage

\section{Introduction}

As one of the closest relatives of Minkowski space-time, the anti-de Sitter space is of great interest and importance in theoretical physics. As the solution of Einstein's equation with an attractive cosmological constant and of maximal symmetry, it plays an essential role in the study of relativity and BTZ black holes (see \cite{Bengtsson2},  \cite{Ca}, \cite{Na} and references therein).

From the  geometric point of view, an anti-de Sitter space is especially interesting for it appears as an example in both sub-Lorentzian geometry and sub-Riemannian geometry (see \cite{CMV}). In this paper, we are particularly interested in its sub-Riemannian structure, with which the anti-de Sitter space appears as a model space of CR Sasakian manifold with constant negative sectional curvature. We study the subelliptic diffusion that is canonically associated to this sub-Riemannian structure. In particular, we give an explicit expression of the subelliptic heat kernel in terms of the Riemannian heat kernel of a  hyperbolic space. 
From this integral representation, we derive the small time asymptotic estimations of the  kernel, and obtain three different behaviors: on the diagonal points, on the vertical cut-locus, and outside of the cut-locus.

Throughout this paper, we denote the $2n+1$-dimensional anti-de Sitter space by $\bH^{2n+1}$ and the $2n+1$-dimensional hyperbolic space by $\mathbf{H}^{2n+1}$. 
The key idea is to take advantage of the strong geometric symmetry that comes from the Hopf fibration:
\[
\bS^1\longrightarrow\bH^{2n+1}\longrightarrow\mathbb{CH}^n
\] 
where $\mathbb{CH}^n$ is the complex hyperbolic space of dimension $2n$.  The indefinite Riemannian submersion $\bH^{2n+1}\rightarrow\mathbb{CH}^n$ is compatible with the contact structure on $\bH^{2n+1}$. Here, the Reeb vector field is given by the generator $T$ of the group action $\bS^1$ on $\bH^{2n+1}$, and the kernel of the contact form (pseudo-Hermitian form) is the horizontal distribution  of the submersion.
As a  model space of CR Sasakian manifold, $\bH^{2n+1}$ is transversely symmetric (namely its pseudo-Hermitian torsion vanishes). The analytic interpretation  is that Reeb vector field $T$ commutes with the sub-Laplacian $L$ on $\bH^{2n+1}$:
\[
LT=TL.
\] 
Another important observation is the analytic continuation of hyperbolic space $\mathbf{H}^{2n+1}$ to $\bH^{2n+1}$, which leads to the relation between the sub-Laplacian $L$ on $\bH^{2n+1}$ and the Riemannian Laplacian $\square$ on $\mathbf{H}^{2n+1}$:
\[
L=\square+T^2.
\] 
With these two ingredients we have heuristically $e^{tL}=e^{t\square}e^{tT^2}$, which suggests the integral representation of the subelliptic heat kernel.  Noting that $\bH^{2n+1}$ is not simply connected, we first work on its universal covering $\widetilde{\bH^{2n+1}}$ with fiber $\R$, then obtain the results on $\bH^{2n+1}$ by ``wrapping'' the ones on $\widetilde{\bH^{2n+1}}$.
We  now state the main result (see Proposition \ref{prop1}). Let $p_t^{\bH^{2n+1}}$ be the subelliptic heat kernel of $\bH^{2n+1}$ and $q_t$ the Riemannian heat kernel of  $\mathbf{H}^{2n+1}$.
\begin{itemize}
\item For $t>0$, $r\in[0,+\infty)$, $ \theta\in(-\infty,+\infty)$, the subelliptic heat kernel on the universal covering of the anti-de Sitter space $\widetilde{\bH^{2n+1}}$ is given by
\[
p_t^{\widetilde{\bH^{2n+1}}}(r, \theta)=\frac{1}{\sqrt{4\pi t}}\int_{-\infty}^{+\infty}e^{\frac{(y-i \theta)^2}{4t} }q_t(\cosh r\cosh y)dy.
\]
\item For $t>0$, $r\in[0,+\infty)$, $ \theta\in[-\pi,\pi]$, the subelliptic heat kernel on  $\bH^{2n+1}$ is given by
\[
p_t^{\bH^{2n+1}}(r, \theta)=\frac{\Gamma(n+1)e^{-n^2t+\frac{\pi^2}{4t}}}{(2\pi)^{n+2} t}\sum_{k\in\mathbb{Z}}\int_{-\infty}^{+\infty}\int_0^{+\infty}\frac{e^{\frac{(y-i\theta-2k\pi i)^2-u^2}{4t}}\sinh u\sin\left(\frac{\pi u}{2t}\right)}{\left(\cosh u+\cosh r\cosh y\right)^{n+1}}du
dy.
\]
\end{itemize}

As an application, we  derive the small time asymptotics of the subelliptic heat kernel by applying the steepest descent method. We shall observe three different structure-illustrating behaviors of the small time asymptotics: on  diagonal points, on the vertical cut-locus, and outside the cut-locus. Another by-product is the explicit formula for the sub-Riemannian distance on $\bH^{2n+1}$.

The study of finding explicit formulas for subelliptic heat kernels has generated a great amount of work (see \cite{A}, \cite{B},  \cite{BB},  \cite{BW}, \cite{BGG}, \cite{Bo}, \cite{Ga}  and the references therein). 
The pioneering work is due to Gaveau (see \cite{Ga}), who obtained an integral representation of the subelliptic heat kernel on a Heisenberg group. A Heisenberg group appears as a flat model of a sub-Riemannian manifold. As for the non-flat models, Baudoin-Bonnefont (see \cite{BB}) and Bonnefont (see \cite{Bo}) derived the subelliptic heat kernel on $\rm{SU}(2)$ and $\rm{SL}(2,\R)$, which appear respectively as $3$-dimensional sub-Riemannian model spaces with constant positive and negative curvatures. 
In the case of higher dimensional models, the explicit subelliptic heat kernels on the CR sphere $\bS^{2n+1}$ (where group structure no longer exists) were first deduced in \cite{BW}. An Hamiltonian approach to the heat kernel was then given by Greiner in \cite{Greiner}.  In present work we discuss the hyperbolic counterpart of the models in \cite{Ga} and \cite{BW}, by giving explicit and geometrically meaningful expressions that hold  in a natural  sub-Riemannian model with negative sectional curvature. 

Explicit expressions of heat kernels have  varied applications, such as determination of sharp constant in functional inequalities  (\cite{BFM}, \cite{LF}),  computation of the sub-Riemannian metric (\cite{BB}), sharp upper and lower bounds for the heat kernel (\cite{BGG},\cite{El}),  and semigroup sub-commutations (\cite{Li}). In this article, the most important application of the integral representation is to derive small time asymptotic estimations of the subelliptic heat kernel on $\bH^{2n+1}$. 

The results of small time estimates for the heat kernels that are associated with general hypoelliptic diffusion operators were given by L\'eandre (\cite{Leandre1}, \cite{Leandre2}) and Ben Arous (\cite{GBA1}, \cite{GBA2}) in the 1980's using probabilistic approaches.  Beals-Greiner-Stanton  studied the small time asymptotics of  subelliptic heat kernels on CR manifolds by using pseudo-differential calculus (see \cite{BGS}). Obtaining small-time estimates through  explicit expressions of the heat kernel began with Beals-Gaveau-Greiner on the Heisenberg group (see \cite{BGG}). In \cite{BB}, Baudoin-Bonnefont derived small time estimates on $\rm{SU}(2)$, and later Bonnefont gave the results on  $\rm{SL}(2,\R)$ and its universal covering (see \cite{Bo}). In this paper, by studying the asymptotic estimations on $\bH^{2n+1}$, we provide another  example of studying the limiting behaviors of the degenerated subelliptic  heat kernels, with the result on the cut-locus of particular interest. 

The paper is organized as follows. In the next section we study the geometry of the anti-de Sitter space $\bH^{2n+1}$ and deduce the sub-Laplacian that is associated to its sub-Riemannian structure. In section $3$ we deduce our main result: the integral representation  for the subelliptic heat kernels on $\bH^{2n+1}$ and  $\widetilde{\bH^{2n+1}}$. These expressions are precise enough to derive the small time asymptotics of the kernel. As a consequence, we obtain an expression for the sub-Riemannian distance.

\section{The sub-Laplacian on $\mathbb{H}^{2n+1}$ and $\widetilde{\bH^{2n+1}}$ }

\subsection{Hopf fibration and sub-Riemannian geometry on anti-de Sitter spaces}
As solutions of Einstein's equation with attractive cosmological constants and maximal symmetry, anti-de Sitter spaces play an important role in mathematical physics (see \cite{Ca}, \cite{Na}). 
A $2n+1$-dimensional anti-de Sitter $\bH^{2n+1}$ is defined as a quadratic 
\[
x_1^2+y_1^2+\cdots+x_n^2+y_n^2-x_{n+1}^2-y_{n+1}^2=-1
\]
embedded in a flat $2n+2$-dimensional space $\R^{2n,2}$ with the metric of  Lorentzian signature $(2n,2)$:
\begin{equation}\label{eq-lorent}
ds^2=dx_1^2+dy_1^2+\cdots+dx_n^2+dy_n^2-dx_{n+1}^2-dy_{n+1}^2.
\end{equation}
Notice that the ambient metric restricted to the tangent bundle $T(\bH^{2n+1})$ has Lorentzian signature $(2n,1)$. Precisely speaking, the target bundle contains $1$ time-like vector field and $2n$ space-like vector fields. Now let us consider a smooth $2n$-dimensional bracket generating distribution $\mathcal{H}$ on $T(\bH^{2n+1})$. There are the following two cases  representing two different geometries: 
\begin{itemize}
\item[(1)] If $\mathcal{H}$ is generated by $2n$ space-like vector fields, then $(\bH^{2n+1}, \mathcal{H},\langle\cdot, \cdot\rangle_{\mathcal{H}})$ is a sub-Riemannian manifold whose sub-Riemannian metric has signature $(2n,0)$.  $\mathcal{H}$ is then the so-called horizontal distribution.
\item[(2)] If $\mathcal{H}$ is generated by $2n-1$ space-like vector fields and a time-like vector field, then the triple $(\bH^{2n+1}, \mathcal{H},\langle\cdot, \cdot\rangle_{\mathcal{H}})$ is the so-call sub-Lorentzian manifold whose sub-Lorentzian metric has  signature $(2n-1,1)$. 
\end{itemize}
In \cite{CMV}, Chang-Markina-Vasil'ev studied both of these cases on $\bH^{3}$ (where the $3$-dimensional anti-de Sitter space is denoted by $\mathit{AdS}$ by the authors). In this article we are mostly interested in the sub-Riemannian structure of $\bH^{2n+1}$. As a sub-Riemannian manifold, $\bH^{2n+1}$ also appears as a model space of CR Sasakian manifold with constant negative sectional curvature. From now on we will work in complex coordinates.  We denote
\[
\|z\|_H^2=\sum_{k=1}^n|z_{k}|^2-|z_{n+1}|^2.
\]
Then $\bH^{2n+1}$ is given by 
\[
\bH^{2n+1}=\lbrace z=(z_1,\cdots,z_{n+1})\in \mathbb{C}^{n+1}, \| z \|^2_H =-1\rbrace.
\]
We consider the isometric group action $\mathbb{S}^1\to\bH^{2n+1}$ such that for $z\in \bH^{2n+1}$,
\begin{equation}\label{eq-action}
(z_1,\cdots, z_n) \rightarrow (e^{i\theta} z_1,\cdots, e^{i\theta} z_n). 
\end{equation}
The orbit $z_\theta=e^{i\theta}z$ satisfies $d z_\theta/d\theta=iz_\theta$ and hence lies in the negative definite plane spanned by $\{z, iz\}$. The identification space of this action is a complex hyperbolic space $\mathbb{CH}^n$, which has negative constant holomorphic sectional curvature (see \cite{magid}). $\bH^{2n+1}\to\mathbb{CH}^n$ is in fact an indefinite Riemannian submersion.

We denote the generator of this group action \eqref{eq-action} by $T$ throughout the paper. For any $f \in{C}^\infty(\bH^{2n+1})$,
\[
Tf(z)=\frac{d}{d\theta}f(e^{i\theta}z)\mid_{\theta=0},
\]
thus
\[
T=i\sum_{j=1}^{n+1}\left(z_j\frac{\partial}{\partial z_j}-\overline{z_j}\frac{\partial}{\partial \overline{z_j}}\right).
\]
Moreover, this Riemannian submersion is compatible with the contact structure  of $\bH^{2n+1}$ in the sense that the horizontal distribution of the submersion is in fact the kernel of the contact form.
We choose a contact form $\eta$ whose characteristic vector field is given by $-T$, i.e., $\eta(-T)=1$, $d\eta (-T,\cdot)=0$. Then in local coordinates we have 
\begin{equation}\label{eqeta}
\eta=-\frac{i}{2}\left(\sum_{j=1}^{n}(\overline{z_j}dz_j-z_j d\overline{z_j})-(\overline{z_{n+1}}dz_{n+1}-z_{n+1} d\overline{z_{n+1}})\right).
\end{equation}
The associated Levi form is given by 
\[
\mathcal{L}_\eta=\frac{1}{2}\sum_{k=1}^ndz_k\wedge d\overline{z_k}-dz_{n+1}\wedge d\overline{z_{n+1}}.
\]
The horizontal distribution of $\bH^{2n+1}$ is given by the Levi distribution $\mathcal{H}(\bH^{2n+1}):={\mathrm{Re}}\{T^{1,0}(\bH^{2n+1})\oplus T^{0,1}(\bH^{2n+1})\}$, and the sub-Riemannian structure $g_\eta$ induced from the Levi form is such that 
\[
g_\eta(X,Y)=(d\eta)(X, JY),\quad \mbox{for all $X, Y\in \mathcal{H}(\bH^{2n+1})$},
\]
where $J:\mathcal{H}(\bH^{2n+1})\to \mathcal{H}(\bH^{2n+1})$ is the complex structure given by $J(V+\overline{V})=i(V-\overline{V})$ for all $V\in T^{1,0}(\bH^{2n+1})$. In fact $\mathcal{L}_\eta$ coincides on $T^{1,0}(\bH^{2n+1})\oplus T^{0,1}(\bH^{2n+1})$ with the $\mathbb{C}$-bilinear extension of $g_\eta$ (see \cite{CR}, Chapter 1).

\begin{remark}
In the case $n=1$,  we have
\begin{equation}\label{eqH3}
\bH^{3}=\lbrace z=(z_1,z_2)\in \mathbb{C}^2, |z_2|^2-|z_1|^2 =1\rbrace.
\end{equation}
It is isomorphic to the group ${\rm{SL}}(2,\R)=\left\{\left(
\begin{array}{ccc}
a&b \\
c&d
\end{array}
\right), a,b,c,d \in\R,ad-bc=1 \right\}$. \\
Indeed, by writing
$z_1=x_1+iy_1$,  $z_2=x_2+iy_2$, $x_1,x_2,y_1,y_2\in\R$, we see that (\ref{eqH3}) becomes
\[
x_2^2+y_2^2-x_1^2-y_1^2=1,
\]
that is 
\[
(x_1+x_2)(x_1-x_2)-(y_1+y_2)(y_1-y_2)=1.
\]
By denoting
\[
a=x_1+x_2, d=x_1-x_2, b=y_1+y_2, c=y_1-y_2, 
\]
we obtain the isomorphism between these two spaces.

With this special group structure, the sub-Laplacian and the corresponding heat kernel on $\bH^{3}$ is studied by Bonnefont in \cite{Bo}.
In \cite{CMV}, the geometric aspects of $\bH^3$ are studied in detail. 
\end{remark}

\subsection{The cylindric coordinates and the sub-Laplacian on $\bH^{2n+1}$}
To derive the canonical sub-Laplacian that is associated to the sub-Riemannian structure on $\bH^{2n+1}$, we first introduce  
 a set of coordinates that takes into account the symmetries of the fibration $\bS^1\to\mathbb{H}^{2n+1} \to \mathbb{CH}^n$.

Let $(w_1,\cdots, w_n,\theta)$ be local coordinates for $\bH^{2n+1}$, where $(w_1,\cdots,w_n)$ are the local coordinates for $\mathbb{CH}^n$ given by $w_j=z_j/z_{n+1}$ and $\theta$ is the local fiber coordinate on $\bS^1$. Namely, $(w_1, \cdots, w_n)$ parametrizes the complex lines passing through the origin, while $\theta$ determines a point on the line that is of unit distance from the north pole\footnote{
We call north pole the point with complex coordinates $z_1=0,\dots, z_{n+1}=1$, it is therefore the point with real coordinates $(0,\dots,0,1,0)$.
}. 
More explicitly, we have
\begin{align}\label{cylinder}
(w_1,\dots,w_n,\theta)\longrightarrow \left(\frac{w_1e^{i\theta}}{\sqrt{1-\rho^2}},\dots,\frac{w_ne^{i\theta}}{\sqrt{1-\rho^2}},\frac{e^{i\theta}}{\sqrt{1-\rho^2}} \right),
\end{align}
where $\rho=\sqrt{\sum_{j=1}^{n}|w_j|^2}$, $\theta \in \R/2\pi\mathbb{Z}$, and $w=(w_1,\dots, w_n) \in \mathbb{CH}^n$. These are referred to as the cylindric coordinates throughout this paper. 
In the cylindric coordinates, we clearly have that 
\begin{equation}\label{eqTT}
T=\frac{\partial}{\partial \theta}.
\end{equation}
By using the diffeomorphism 
\begin{align*}
(w_1,\dots,w_n,\theta,\kappa)\longrightarrow \left(\frac{\kappa w_1e^{i\theta}}{\sqrt{1-\rho^2}},\dots,\frac{\kappa w_ne^{i\theta}}{\sqrt{1-\rho^2}},\frac{\kappa e^{i\theta}}{\sqrt{1-\rho^2}} \right),
\end{align*}
and then restricting to the surface where 
\[
\kappa=1,\  \frac{\partial }{\partial \kappa}=0,
\]
we can compute that  for $1\leq k\leq n$,
\begin{eqnarray*}
\frac{\partial}{\partial z_k} &=&\sqrt{1-\rho^2}e^{-i\theta}\frac{\partial}{\partial w_k},\\
\frac{\partial}{\partial \overline{z_k}} &=&\sqrt{1-\rho^2}e^{i\theta}\frac{\partial}{\partial \overline{w_k}}, \\
\end{eqnarray*}
and
\begin{eqnarray*}
\frac{\partial}{\partial z_{n+1}} &=&-\sqrt{1-\rho^2}e^{-i\theta}\left(\sum_{j=1}^n w_j\frac{\partial}{\partial w_j}-\frac{1}{2i}\frac{\partial}{\partial \theta}\right), \\
\frac{\partial}{\partial \overline{z_{n+1}}}  &=& -\sqrt{1-\rho^2}e^{i\theta}\left(\sum_{j=1}^n\overline{w_j}\frac{\partial}{\partial \overline{w_j}}+\frac{1}{2i}\frac{\partial}{\partial \theta}\right).
\end{eqnarray*}
Moreover, from (\ref{eqeta}) we have that the contact form in cylindric coordinates is given by
\begin{equation}\label{eq-eta-cylin}
\eta=-d\theta+\frac{i}{2(1-\rho^2)}\sum_{j=1}^n(w_jd\overline{w_j}-\overline{w_j}dw_j),
\end{equation}
and the Levi form  is 
\begin{equation}\label{eqL}
\mathcal{L}_\eta=-\frac{i}{2}d\eta=\frac{1}{2(1-\rho^2)}\sum_{j=1}^ndw_j\wedge d\overline{w_j}+\frac{1}{2(1-\rho^2)^2}\sum_{j,k=1}^n\overline{w_k}w_jdw_k\wedge d\overline{w_j}.
\end{equation}
Now to construct a basis $\lbrace T_k\rbrace_{k=1}^{n}$ for $T^{1,0}(\bH^{2n+1})$, we can lift a basis $\lbrace \frac{\partial}{\partial w_k}\rbrace_{k=1}^n$ of $T^{1,0}(\mathbb{CH}^n)$ to $T^{1,0}(\bH^{2n+1})$ by using the fact  that $T_k\in \mathbf{ker}(\eta)$, $k=1,\cdots, n $. Easy calculations show that
\begin{equation}\label{eqT}
T_k=
\frac{\partial}{\partial w_k}+\frac{\overline{w_k}}{2i(1-\rho^2)}\frac{\partial}{\partial\theta}.
\end{equation}
We denote by $\lbrace \overline{T}_k\rbrace_{k=1}^{n}$ the conjugate of $\lbrace T_k\rbrace_{k=1}^{n}$,  which form a basis of $T^{0,1}(\bH^{2n+1})$. The horizontal distribution $\mathcal{H}(\bH^{2n+1})={\mathrm{Re}}\{T^{1,0}(\bH^{2n+1})\oplus T^{0,1}(\bH^{2n+1})\}$ is generated by space-like vector fields and the Reeb vector field $T$ is the time-like direction. Moreover, we can easily see that $\mathcal{L}_\eta$ is strictly pseudo-convex on $T^{1,0}(\bH^{2n+1})\oplus T^{0,1}(\bH^{2n+1})$: for any $c=(c_1,\dots, c_n)\in\mathbb{C}^n$, $c\not=0$,
\[
L_\eta(c_1T_1+\dots, c_nT_n, \bar{c}_1\overline{T}_1+\dots+\bar{c}_n\overline{T}_n)\ge\frac{\|c\|^2}{2(1-\rho^2)}>0.
\] 
Now we are ready to compute the sub-Laplacian $L$ on $\bH^{2n+1}$. Let $\Psi=\eta\wedge(d\eta)^n$ be the corresponding volume form. The sub-Laplacian $L$ is given by 
\begin{equation}\label{eq-div-grad}
Lu=\mathrm{div}(\nabla_{\mathcal{H}}u)
\end{equation} 
for any $u\in C^2(\bH^{2n+1})$, where $\nabla_{\mathcal{H}}$ is the horizontal gradient (projection of gradient $\nabla$ on $\mathcal{H}(\bH^{2n+1})$).

\begin{proposition}\label{prop2}
Let $L$ be the sub-Laplacian on $\bH^{2n+1}$ associated to the contact form $\eta$. Then 
\begin{equation}\label{eq-L-cylin}
L=4(1-\rho^2)\sum_{k=1}^n \frac{\partial^2}{\partial w_k \partial\overline{w_k}}- 4(1-\rho^2)\mathcal{R} \overline{\mathcal{R}}+\rho^2\ \frac{\partial^2}{\partial \theta^2}+2i(1-\rho^2)(\mathcal{R} -\overline{\mathcal{R}})\frac{\partial}{\partial\theta},
\end{equation}
where $\mathcal{R}=\sum_{k=1}^nw_k\frac{\partial}{\partial w_k}$.
\end{proposition}
\begin{proof}
To compute explicitly the sub-Laplacian, we use another expression that is more convenient for us. For any local frame $\lbrace T_k\rbrace_{k=1}^n$ of $T^{1,0}(\bH^{2n+1})$, if we denote $h_{k\overline{j}}=\mathcal{L}_{\eta}(T_k,\overline{T}_j)$, then
\begin{equation}\label{eqLu}
Lu=\sum_{k,j=1}^n \left(h^{k\overline{j}}(\nabla^2 u)(T_k,\overline{T_j})+h^{\overline{k}j}(\nabla^2u)(\overline{T_k},T_j) \right),
\end{equation}
where the matrix $[h^{\overline{k}j}]$ is the inverse of $[h_{k\overline{j}}]$, i.e., $[h^{\overline{k}j}] =[h_{k\overline{j}}]^{-1}$, $h^{\overline{k}j}h_{j\overline{l}}=\delta_{kl}$ for all $1\le j,k\le n$. It is known to be an equivalent expression of the sub-Laplacian \eqref{eq-div-grad} (see \cite{CR}, P117 for detailed proof). From \eqref{eqL}, we can compute that
\begin{equation}\label{eqh}
h^{\overline{k}j}=2(1-\rho^2)(\delta_{kj}-\overline{w_k}w_j),\quad k,j=1,\cdots,n.
\end{equation}
Plugging  (\ref{eqT}) and (\ref{eqh}) into (\ref{eqLu}), we obtain
\[
L=4(1-\rho^2)\sum_{k=1}^n \frac{\partial^2}{\partial w_k \partial\overline{w_k}}- 4(1-\rho^2)\mathcal{R} \overline{\mathcal{R}}+\rho^2\ \frac{\partial^2}{\partial \theta^2}+2i(1-\rho^2)(\mathcal{R} -\overline{\mathcal{R}})\frac{\partial}{\partial\theta}.
\]
\end{proof}
From (\ref{eqL}) we know that the volume form $\Psi$ in cylindric coordinates is given by 
\[
\Psi=\eta\wedge(d\eta)^n=\frac{i^{n^2}n!}{(1-\rho^2)^{2n}}\mathbf{det}(\alpha_{k\overline{j}})d\theta\wedge dw_1\wedge\cdots\wedge dw_n\wedge d\overline{w_1}\wedge\cdots\wedge d\overline{w_n}
\]
where $\alpha_{k\overline{k}}=1, \alpha_{k\overline{j}}=\overline{w_k}w_j$,  for $1\leq k, j\leq n$, $k\not=j$. Since $\bH^{2n+1}$ is complete, we know that $L$ is essentially self-adjoint on $C^{\infty}_0(\bH^{2n+1})$  (see  \cite{S-83} and \cite{S}).\\

Next let us consider the Laplacian on $\bH^{2n+1}$ with respect to the restricted Lorentzian metric \eqref{eq-lorent} and denote it by $\square$.  If we denote the Laplacian on the flat Lorentzian space $\R^{2n,2}$ by $\square_\mathbb{C}$, then $\square$ can be obtained by restricting $\square_\mathbb{C}$ to $\bH^{2n+1}$. In complex coordinates $\square_\mathbb{C}$  is given by
\[
\square_{\mathbb{C}}=4\left(\sum_{k=1}^n\frac{\partial^2}{\partial z_k\partial \overline{z_k}}-\frac{\partial^2}{\partial z_{n+1}\partial\overline{z_{n+1}}}\right).
\]
Let $f:\bH^{2n+1}\to\mathbb{C}$ be a smooth function. It may be extended to $\tilde{f}$ on $\mathbb{C}^{n+1}\setminus \lbrace 0\rbrace$ by
\[
\tilde{f}(z)=f\left( \frac{z}{\|z\|}\right),
\]
and  we have 
\[
\square f(z)=\square_{\mathbb{C}}\tilde{f}(z),\quad z\in \bH^{2n+1}.
\] 
Simple but tedious calculations show that
\[
\square=4\left(\sum_{k=1}^n\frac{\partial^2}{\partial z_k\partial \overline{z_k}}-\frac{\partial^2}{\partial z_{n+1}\partial\overline{z_{n+1}}}\right)-(\mathcal{S}+\overline{\mathcal{S}})^2+2n(\mathcal{S}+\overline{\mathcal{S}}).
\]
where $\mathcal{S}=\sum_{k=1}^nz_k\frac{\partial}{\partial z_k}$. By plugging in \eqref{cylinder} we then obtain  
\[
 \square=4(1-\rho^2)\sum_{k=1}^n \frac{\partial^2}{\partial w_k \partial\overline{w_k}}- 4(1-\rho^2)\mathcal{R} \overline{\mathcal{R}}-(1-\rho^2)\ \frac{\partial^2}{\partial \theta^2}+2i(1-\rho^2)(\mathcal{R} -\overline{\mathcal{R}})\frac{\partial}{\partial\theta}.
\]
Since $T=\frac{\partial}{\partial\theta}$, by comparing the above expression to \eqref{eq-L-cylin} we immediately obtain that 
\begin{equation}\label{LT}
L=\square +T^2.
\end{equation}

\begin{remark}\label{Rk-Sasakian}
We can observe a fact which will be important for us:  $L$ and $T$ commute. Namely for any smooth functions we have
\[
TL=LT.
\]
This is equivalent to the transverse symmetry of $\bH^{2n+1}$ (Sasakian structure of $\bH^{2n+1}$). In other words, this is equivalent to the vanishing pseudo-Hermitian torsion of $\bH^{2n+1}$.
\end{remark}

In the study of the heat kernel, due to the radial symmetry of the fibration $\bH^{2n+1} \to \mathbb{CH}^n$, it will be enough for us  to compute the radial part of $L$ with variables $(\rho,\theta)$.
From \eqref{eq-L-cylin} we consider the following operator:
\begin{equation}\label{eq-L-rho}
 \tilde{L}=\left(1-\rho^2\right)^2\frac{\partial^2}{\partial \rho^2}+(1-\rho^2)\left(\frac{(2n-1)}{\rho}-\rho\right)\frac{\partial}{\partial \rho}+\rho^2\frac{\partial^2 }{\partial \theta^2}.
\end{equation}
$\tilde{L}$ is defined on the space $\mathcal{D}$, which consists of the smooth functions $ f :  \mathbb{R}_{\ge 0} \times \R/2\pi\mathbb{Z}  \to \mathbb{R}$  satisfying $\frac{\partial f}{\partial  \rho} =0$  when $\rho =0$.  In the next proposition we prove that $\tilde{L}$ is indeed the radial part of $L$.

\begin{proposition} Let us denote by $\psi$ the map from $\bH^{2n+1} $ to $  \mathbb{R}_{\ge 0} \times \R/2\pi\mathbb{Z} $ satisfying 
\[
\psi \left(\frac{w_1e^{i\theta}}{\sqrt{1-\rho^2}},\dots,\frac{w_ne^{i\theta}}{\sqrt{1-\rho^2}},\frac{e^{i\theta}}{\sqrt{1-\rho^2}} \right)=\left(\rho, \theta \right).
\]
 For every  $f \in \mathcal{D}$,  we have
\[
L(f \circ \psi)=(\tilde{L} f) \circ \psi.
\]
\end{proposition}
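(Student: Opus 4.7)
The plan is a direct chain-rule computation, pushing the expression for $L$ from Proposition \ref{prop2} through $\psi$ term by term. Writing $\rho^2=\sum_k w_k\overline{w_k}$, I use $\partial_{w_k}\rho=\overline{w_k}/(2\rho)$ and $\partial_{\overline{w_k}}\rho=w_k/(2\rho)$ so that any Wirtinger derivative of $f\circ\psi$ collapses to a linear combination of $f_\rho$, $f_{\rho\rho}$, $f_\theta$, $f_{\theta\theta}$ with explicit polynomial coefficients in $w_k,\overline{w_k}$. The main point is that after summing over $k$ all dependence on the individual $w_k$ disappears and only $\rho$ remains.

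The key preliminary identity is $\mathcal{R}(f\circ\psi)=\overline{\mathcal{R}}(f\circ\psi)=\frac{\rho}{2}f_\rho$. This immediately annihilates the cross term $2i(1-\rho^2)(\mathcal{R}-\overline{\mathcal{R}})\partial_\theta$ in the formula for $L$, which is the outcome one expects from the $U(n)$-invariance of $f\circ\psi$. Applying $\mathcal{R}$ once more and using the chain rule on $\rho f_\rho$ yields $\mathcal{R}\overline{\mathcal{R}}(f\circ\psi)=\frac{\rho^2}{4}f_{\rho\rho}+\frac{\rho}{4}f_\rho$, while a parallel direct computation of the Wirtinger Laplacian gives $\sum_{k=1}^n\partial_{w_k}\partial_{\overline{w_k}}(f\circ\psi)=\frac{1}{4}f_{\rho\rho}+\frac{2n-1}{4\rho}f_\rho$ (here the $\sum_k|w_k|^2=\rho^2$ identity is what produces the factor $2n-1$).

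Substituting these into the formula of Proposition \ref{prop2} and collecting, the coefficient of $f_{\rho\rho}$ becomes $(1-\rho^2)-(1-\rho^2)\rho^2=(1-\rho^2)^2$, the coefficient of $f_\rho$ becomes $(1-\rho^2)\left(\frac{2n-1}{\rho}-\rho\right)$, and the $\rho^2\partial_\theta^2$ term carries over unchanged. The right-hand side is therefore exactly $\tilde{L}f$ evaluated at $(\rho,\theta)=\psi(\cdot)$.

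The only subtlety is the behavior at $\rho=0$: the individual $1/\rho$ factors arising in the computation would be singular there, but the condition $\partial f/\partial\rho=0$ at $\rho=0$ built into the definition of $\mathcal{D}$ is precisely what is needed to ensure $f\circ\psi$ extends smoothly across the fiber over the north pole and that $\tilde{L}f$ is continuous at $\rho=0$. Modulo this regularity remark the proof is pure bookkeeping, and the cleanest presentation would isolate the three computations of $\mathcal{R}(f\circ\psi)$, $\mathcal{R}\overline{\mathcal{R}}(f\circ\psi)$, and $\sum_k\partial_{w_k}\partial_{\overline{w_k}}(f\circ\psi)$ as intermediate identities before the final assembly.
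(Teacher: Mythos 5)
Your proof is correct and follows essentially the same route as the paper's: the paper also reduces $L(f\circ\psi)$ to the two radial identities $\sum_k\partial_{w_k}\partial_{\overline{w_k}}(f\circ\psi)=\bigl(\tfrac14 f_{\rho\rho}+\tfrac{2n-1}{4\rho}f_\rho\bigr)\circ\psi$ and $\mathcal{R}(f\circ\psi)=\overline{\mathcal{R}}(f\circ\psi)=\bigl(\tfrac{\rho}{2}f_\rho\bigr)\circ\psi$ and then substitutes into Proposition \ref{prop2}. You merely make explicit the chain-rule derivations and the intermediate value of $\mathcal{R}\overline{\mathcal{R}}(f\circ\psi)$, which the paper leaves as "by symmetries."
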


\begin{proof}
Notice that by the radial symmetry  we have
\[
\left(\sum_{k=1}^n\frac{\partial^2}{\partial w_k\partial\overline{w_k}}\right)(f\circ\psi)=\left(\left(\frac{1}{4}\frac{\partial^2}{\partial\rho^2}+\frac{2n-1}{4\rho}\frac{\partial}{\partial \rho}\right)f\right)\circ\psi
\]
and
\[
\mathcal{R}(f\circ\psi)=\overline{\mathcal{R}}(f\circ\psi)=\left(\left(\frac{1}{2}\rho\frac{\partial}{\partial \rho}\right)f\right)\circ\psi.
\]
Together with Proposition \ref{prop2}, we have the conclusion.
\end{proof}

The volume measure in cylindric coordinates is correspondingly given by $\frac{\rho^{2n-1}}{(1-\rho^2)^{n+1 }} d\rho d\theta$. $\tilde{L}$ is then symmetric and essentially self-adjoint on $\mathcal{D}$ with respect to the above volume measure. 
Moreover, instead of $\rho$, it will be expedient to introduce a more geometrically meaningful variable $r$ such that
\[
\rho=\tanh r.
\] 
Indeed $r$ is the Riemannian distance on $\mathbb{CH}^n$ from its north pole. From \eqref{eq-L-rho} we then have
\begin{equation}\label{eq3}
\tilde{L}=\frac{\partial^2}{\partial r^2}+((2n-1)\coth r+\tanh r)\frac{\partial}{\partial r}+\tanh^2r\frac{\partial^2}{\partial \theta^2}.
\end{equation}
This expression shall be the most convenient one for us and be used throughout the paper. If we denote the corresponding volume measure in the coordinates $(r,\theta)$ by $\mu_r$, then
\[
d\mu_r=\frac{2\pi^n}{\Gamma(n)}(\sinh r)^{2n-1}\cosh r drd\theta.
\]

\begin{example}
When $n=1$, \eqref{eq3} is 
\begin{eqnarray*}
\tilde{L}&=&\frac{\partial^2}{\partial r^2}+2\coth 2r\frac{\partial}{\partial r}+\tanh^2r\frac{\partial^2}{\partial\theta^2}.
\end{eqnarray*}
This gives the radial sub-Laplacian of ${\rm{SL}}(2,\R)$, which coincides with the result in \cite{Bo}.
\end{example}

We can easily compute the radial part $\tilde{\square}$ of the Laplacian on $\bH^{2n+1}$: 
\begin{equation}\label{square}
\tilde{\square}=\frac{\partial^2}{\partial r^2}+((2n-1)\coth r+\tanh r)\frac{\partial}{\partial r}-\frac{1}{\cosh^2r}\frac{\partial^2}{\partial\theta^2}.
\end{equation}
Recall in cylindric coordinates   $z_{n+1}=\cosh r e^{i\theta}$.  Clearly the Riemannian distance $\delta$ on $\bH^{2n+1}$ from its north pole satisfies
\[
\cosh \delta =-\cosh  r \cos \theta.
\]
Indeed if we plug the above equation into \eqref{square}, we may recover the expression of  $\tilde{\square}$ in spherical coordinates:
 \[
 \tilde{\square}=\frac{\partial^2}{\partial \delta^2}+2n\coth\delta\frac{\partial}{\partial\delta}
 \]
 with symmetric measure $\frac{2\pi^n}{\Gamma(n)}(\sinh\delta)^{2n}d\delta$. 

\subsection{Laplacian on the hyperbolic space and the analytic continuation}

Another important fact for us is that an anti-de Sitter space can be seen as the analytic continuation  of a hyperbolic space. In this section, we present a sketch of the results (more background descriptions can be found in \cite{Bengtsson}, P47).

A hyperbolic space $\mathbf{H}^{2n+1}$ is defined as the upper sheet of a two-sheeted hyperboloid 
\[
x_1^2+\cdots+x_{2n+1}^2-x_{2n+2}^2=-1
\]
embedded in a flat $2n+2$-dimensional space $\R^{2n+1,1}$ with the metric of  Lorentzian signature $(2n+1,1)$:
\[
ds^2=dx_1^2+\cdots+dx_{2n+1}^2-dx_{2n+2}^2.
\] 
$\mathbf{H}^{2n+1}$ appears as an analytic continuation of the anti-de Sitter space $\bH^{2n+1}$. We observe it as follows: let $(x_1,y_1,\dots, x_{n+1}, y_{n+1})$ be the real coordinates of $\bH^{2n+1}$ in the ambient space $\R^{2n+1,1}$, then from \eqref{cylinder} we have,
\[
x_k=\frac{u_k\cos(\theta)-v_k\sin(\theta)}{\sqrt{1-\rho^2}}, \ y_k=\frac{v_k\cos(\theta)+u_k\sin(\theta)}{\sqrt{1-\rho^2}},\  k=1,\dots,n
\]
and 
\[
x_{n+1}=\frac{\cos(\theta)}{\sqrt{1-\rho^2}}, \ y_{n+1}=\frac{\sin(\theta)}{\sqrt{1-\rho^2}},
\]
where $u_k=\mathbf{Re}(w_k)$ and $v_k=\mathbf{Im}(w_k)$, $k=1,\dots, n$.
If we consider the analytic continuation of the metric of Lorentzian signature to the one of Euclidean metric, by letting the time-like parameter
\begin{equation}\label{eq-theta-E}
\theta\to -iy,
\end{equation}
then the resulting space is indeed a $2n+1$-dimensional hyperbolic space $\mathbf{H}^{2n+1}$:
\[
X_1^2+Y_1^2+\cdots+Y_n^2+X_{n+1}^2-Y_{n+1}^2=-1,
\]
where 
\[
X_k=\frac{u_k\cosh(y)-v_k\sinh(y)}{\sqrt{1-\rho^2}}, \ Y_k=\frac{v_k\cosh(y)+u_k\sinh(y)}{\sqrt{1-\rho^2}},\  k=1,\dots,n
\]
and 
\[
X_{n+1}=\frac{\cosh(y)}{\sqrt{1-\rho^2}}, \ Y_{n+1}=\frac{\sinh(y)}{\sqrt{1-\rho^2}}.
\]
The ambient space $\R^{2n+1,1}$ of $\mathbf{H}^{2n+1}$  is in fact the analytic continuation of the ambient space $\R^{2n,2}$ of $\bH^{2n+1}$  (see \eqref{eq-lorent}). The metric on $\R^{2n+1,1}$ restricted to the tangent bundle of $\mathbf{H}^{2n+1}$ is the Riemannian metric with Euclidean signature, namely $(2n+1, 0)$. 
 
From this point of view, an anti-de Sitter space appears as a hyperbolic model embedded in the flat Minkowski space $\R^{2n+1,1}$. Its topology is $\R^{2n}\times \bS^1$. $\bH^{2n+1}$ can also be seen as the counterpart of a de Sitter space in the same ambient space $\R^{2n+1,1}$, where the latter one appears as an elliptic model. Indeed a $2n+1$-dimensional de Sitter space is defined as a one sheeted hyperboloid 
\[
X_1^2+\cdots+X_{2n}^2-X_{2n+1}^2=1
\]
embedded in $\R^{2n+1,1}$, whose topology is given by $S^{2n}\times \R$. The analytic continuation of a $2n+1$-dimensional de Sitter space is the Riemannian sphere $S^{2n+1}$ (for more details, see \cite{Bengtsson}).

Now let us consider the Laplacian on a $2n+1$-dimensional hyperbolic space $\mathbf{H}^{2n+1}$ and denote it by $\Delta$. From the argument of analytic continuation, we immediately obtain the radial part of $\Delta$ in cylindric coordinates
\[
\tilde{\Delta}=\frac{\partial^2}{\partial r^2}+((2n-1)\coth r+\tanh r)\frac{\partial}{\partial r}+\frac{1}{\cosh^2r}\frac{\partial^2}{\partial y^2},
\]
where $y$ is the time-like parameter in Euclidean space and satisfies \eqref{eq-theta-E}.  $\tilde{\Delta}$ is an elliptic operator defined $(0,+\infty)\times\R$ and is essentially self-adjoint with respect to the measure $\frac{2\pi^n}{\Gamma(n)}(\sinh r)^{2n-1}\cosh rdrdy$. 
Also if we denote by $\delta'$ the Riemannian distance on $\mathbf{H}^{2n+1}$ from its north pole, then
\[
\cosh{\delta'}=\cosh r\cosh y.
\]

We finish this section by noticing  that $\bH^{2n+1}$ is not simply connected. However, we can always unwrap it by studying its universal covering $\widetilde{\bH^{2n+1}}$.
In this case, the fiber of the $\widetilde{\bH^{2n+1}}$ is $\R$ and the Hopf fibration is given by
\[
\R\longrightarrow\widetilde{\bH^{2n+1}}\longrightarrow \mathbb{CH}^n.
\]
The cylindric coordinates on $\widetilde{\bH^{2n+1}}$ are $(w_1,\cdots,w_n,\theta)\in\mathbb{C}^n\times\R$, and the projection from $\widetilde{\bH^{2n+1}}$ to $\bH^{2n+1}$ can be obtained by projecting  the fiber coordinate $\theta$ from $\R$ to $\R/2\pi\mathbb{Z}$. 
Moreover, the horizontal distribution and the Reeb vector field on $\bH^{2n+1}$ can be lifted to $\widetilde{\bH^{2n+1}}$. The expressions remain the same as in (\ref{eqT}) and (\ref{eqTT}), and are defined for all $(w_1,\cdots,w_n,\theta)\in\mathbb{C}^n\times\R$.
The sub-Laplacian as well as its radial part defined on $\widetilde{\bH^{2n+1}}$ obviously share the same expressions as the ones on $\bH^{2n+1}$. In the rest of the paper, we shall use the same notation for both cases.

\section{The subelliptic heat kernel on $\bH^{2n+1}$ and $\widetilde{\bH^{2n+1}}$}

\subsection{Integral representation of the heat kernel}

In this section we derive a integral representation of the subelliptic heat kernel on $\bH^{2n+1}$ (and  $\widetilde{\bH^{2n+1}}$) in terms of the Riemannian heat kernel on $\mathbf{H}^{2n+1}$ associated to $\tilde{\Delta}$. The Riemannian heat kernel on $\mathbf{H}^{2n+1}$  issued from its north pole is given explicitly by Gruet (see \cite{Gr}):
\begin{equation}\label{eq6}
q_t(\cosh\delta')=\frac{\Gamma(n+1)e^{-n^2t}}{(2\pi)^{n+1}\sqrt{\pi t}}\int_0^{+\infty}\frac{e^{\frac{\pi^2-u^2}{4t}}\sinh u\sin{\frac{\pi u}{2t}}}{\left(\cosh u+\cosh\delta'\right)^{n+1}}du,
\end{equation}
where $\delta'$ is the Riemannian distance on $\mathbf{H}^{2n+1}$ from the north pole. 
Another useful formula for the heat kernel $q_t$ which shall be used later in obtaining the small time asymptotics is (see \cite{T2}, P125):
\begin{equation}\label{heat_kernel_odd}
q_t (\cosh \delta')= \frac{e^{-n^2t}}{\sqrt{4\pi t}} \left( -\frac{1}{2\pi \sinh \delta'} \frac{\partial}{\partial \delta'} \right)^n \left(e^{-\frac{\delta'^2}{4t}}\right).
\end{equation}
Since $q_t$ satisfies the heat equation:
\begin{equation}\label{eqheat}
\frac{\partial }{\partial t}q_t(\cosh r\cosh y)=\tilde{\Delta}q_t(\cosh r\cosh y),
\end{equation}
by plugging in \eqref{eq-theta-E} we easily obtain that 
\[
\frac{\partial }{\partial t}q_t(\cosh r\cos \theta)=\tilde{\square}q_t(\cosh r\cos \theta),
\]
where $\tilde{\square}$ is the radial part of the Laplacian on $\bH^{2n+1}$ (and $\widetilde{\bH^{2n+1}}$) given by \eqref{square}. \\

Now we are ready to deduce the integral representation of the subelliptic heat kernel on  $\widetilde{\bH^{2n+1}}$. The key idea is to take advantage of the intertwining between $L$ and $T$ and the relation  described in  \eqref{LT}.  We have a heuristic observation:
\begin{align}\label{commutation}
e^{tL}=e^{t\frac{\partial^2}{\partial\theta^2}}e^{t\square}.
\end{align}
It  suggests that the sub-Riemannian heat kernel $p_t^{\widetilde{\bH^{2n+1}}}$ on  $\widetilde{\bH^{2n+1}}$ can be expressed as the Riemannian heat kernel on   $\widetilde{\bH^{2n+1}}$ convolved with a Gaussian kernel, from which we can then express $p_t^{\widetilde{\bH^{2n+1}}}$ in terms of  Riemannian heat kernel on $\mathbf{H}^{2n+1}$ by using the  analytic continuation argument. 
\begin{prop}\label{prop1}
For $t>0$, $r\in[0,+\infty)$, $ \theta\in(-\infty,+\infty)$, the subelliptic heat kernel on  $\widetilde{\bH^{2n+1}}$ is given by
\begin{equation}\label{eq8}
p_t^{\widetilde{\bH^{2n+1}}}(r, \theta)=\frac{1}{\sqrt{4\pi t}}\int_{-\infty}^{+\infty}e^{\frac{(y-i \theta)^2}{4t} }q_t(\cosh r\cosh y)dy.
\end{equation}
More precisely,
\begin{equation}\label{eq-pt-Gr}
p_t^{\widetilde{\bH^{2n+1}}}(r, \theta)=\frac{\Gamma(n+1)e^{-n^2t+\frac{\pi^2}{4t}}}{(2\pi)^{n+2} t}\int_{-\infty}^{+\infty}\int_0^{+\infty}\frac{e^{\frac{(y-i\theta)^2-u^2}{4t}}\sinh u\sin\left(\frac{\pi u}{2t}\right)}{\left(\cosh u+\cosh r\cosh y\right)^{n+1}}du
dy.
\end{equation}
\end{prop}
\begin{proof}
Since \eqref{eq-pt-Gr} can be simply obtained from \eqref{eq6} and \eqref{eq8}, we just need to prove that \eqref{eq8} is the desired subelliptic heat kernel. Before proceeding to the proof, let us first check the existence of the integral in  \eqref{eq8}. Noting that $e^{\frac{(y-i\theta)^2}{4t}}$ blows up as $y\to\pm\infty$, we need to show that $q_t$ decays fast enough to compensate $e^{\frac{(y-i\theta)^2}{4t}}$. From \eqref{heat_kernel_odd} we can easily see that $q_t(\cosh \delta)$ decays at least at the scale of $e^{-\frac{\delta^2}{4t}}\left(\frac{\delta}{\sinh\delta}\right)$; namely there exists a $C>0$ such that $q_t(\cosh \delta)\le Ce^{-\frac{\delta^2}{4t}}\left(\frac{\delta}{\sinh\delta}\right)$. 
Since $e^{-\frac{\delta^2}{4t}}\left(\frac{\delta}{\sinh\delta}\right)$ decreases on $(0,+\infty)$ and $\cosh^{-1}(\cosh r\cosh y)\ge y$, we have
\begin{equation}\label{eq-integrability}
q_t(\cosh r\cosh y)\le Ce^{-\frac{y^2}{4t}}\left(\frac{y}{\sinh y}\right).
\end{equation}
Hence we can conclude the integrability of \eqref{eq8} by
\[
\int_{-\infty}^{+\infty}\bigg|e^{\frac{(y-i \theta)^2}{4t} }q_t(\cosh r\cosh y)\bigg|dy\le C\int_{-\infty}^{+\infty} \left(\frac{y}{\sinh y}\right)dy<+\infty.
\]
Also the decay of $q_t$ allows us to integrate by part, differentiate under the integral sign, and interchange integrals  later.

Next we want to show \eqref{eq8}. Denote
\[
h_t(r, \theta)=\frac{1}{\sqrt{4\pi t}}\int_{-\infty}^{+\infty}e^{\frac{(y-i \theta)^2}{4t} }q_t(\cosh r\cosh y)dy.
\]
Our first goal is to prove that $h_t$ satisfies the heat equation $\frac{\partial}{\partial t}h_t=\tilde{L}h_t$, where $\tilde{L}$ is given in \eqref{eq3}. 
We denote $L_0=\frac{\partial^2}{\partial r^2}+((2n-1)\coth r+\tanh r)\frac{\partial}{\partial r}$. Then by comparing with (\ref{eqheat}), we have
\[
\frac{\partial}{\partial t}\left(q_t(\cosh r\cosh y) \right)=\left(L_0+\frac{1}{\cosh^2 r}\frac{\partial^2}{\partial y^2}\right)(q_t(\cosh r\cosh y)).
\]
Since
\begin{equation}\label{dt}
\frac{\partial}{\partial t}\left(\frac{e^{\frac{(y-i\theta)^2}{4t} }}{\sqrt{4\pi t}}\right)=\frac{\partial^2}{\partial \theta^2}\left(\frac{e^{\frac{(y-i\theta)^2}{4t}} }{\sqrt{4\pi t}}\right)
=-\frac{\partial^2}{\partial y^2}\left(\frac{e^{\frac{(y-i\theta)^2}{4t}} }{\sqrt{4\pi t}}\right),
\end{equation}
by integrating by parts twice with respect to $y$, we have 
\begin{equation}\label{eq-qt-L0}
\frac{\partial}{\partial t}\left(h_t(r, \theta) \right)=\frac{1}{\sqrt{4\pi t}}\int_{-\infty}^{+\infty}e^{\frac{(y-i \theta)^2}{4t} }\left(L_0-\tanh^2 r\frac{\partial^2}{\partial y^2}\right)q_t(\cosh r\cosh y)dy.
\end{equation}
On the other hand, since $\tilde{L}=L_0+\tanh^2r\frac{\partial^2}{\partial \theta^2}$, we have
\[
\tilde{L}(h_t(r, \theta))=\int_{-\infty}^{+\infty}\left(\left(\tanh^2r\frac{\partial^2}{\partial \theta^2}\left(\frac{e^{\frac{(y-i\theta)^2}{4t} }}{\sqrt{4\pi t}}\right)\right)q_t(\cosh r\cosh y)+\left(\frac{e^{\frac{(y-i\theta)^2}{4t} }}{\sqrt{4\pi t}}\right)L_0\left(q_t(\cosh r\cosh y)\right)\right)dy.
\]
By (\ref{dt}) and twice integrating by parts, we have
\[
\tilde{L}(h_t(r, \theta))=\frac{1}{\sqrt{4\pi t}}\int_{-\infty}^{+\infty}e^{\frac{(y-i \theta)^2}{4t} }\left(L_0-\tanh^2 r\frac{\partial^2}{\partial y^2}\right)q_t(\cosh r\cosh y)dy.
\]
Comparing with \eqref{eq-qt-L0}, we then obtain
\[
\tilde{L}(h_t(r, \theta))=\frac{\partial}{\partial t}\left(h_t(r, \theta) \right).
\]

It remains to check the initial condition. Since $h_t(r,\theta)$ is the heat kernel initiated from north pole $(0,0)$, we just need to show that $\lim_{t\to0}h_t\ast f(0,0)=f(0,0)$, where $h_t\ast f$ is given by $(h_t\ast f)(r', \theta')= \int_{r>0}\int_{\theta=-\infty}^{+\infty}h_t(r, \theta)f(r'-r, \theta'-\theta)d\mu_r $ for all $r'>0$, $\theta'\in\R$.
It suffices to check the initial condition for functions of the form $f(r, \theta)=e^{i\lambda \theta}g(r)$ where $\lambda\in\mathbb{R}$ and $g:(0,+\infty)\to \R$ is smooth.  We have
\begin{eqnarray*}
& & (h_t\ast f)(0,0)= \int_{r>0}\int_{\theta=-\infty}^{+\infty}h_t(r, \theta)f(r, \theta)d\mu_r \\
&=& \frac{2\pi^n}{\Gamma(n)}\int_{r>0}\int_{\theta=-\infty}^{+\infty}\int_{y>0}\left( \frac{e^{-\frac{(\theta+iy)^2}{4t}}+e^{-\frac{(\theta-iy)^2}{4t}}}{\sqrt{4\pi t}} \right)q_t(\cosh r\cosh y) e^{i\lambda\theta}g(r)(\sinh r)^{2n-1}\cosh rdyd\theta dr.
\end{eqnarray*}
By changing the contour of the integral, we get
\[
\int_{\theta=-\infty}^{+\infty}\left(\frac{e^{-\frac{(\theta-iy)^2}{4t}}}{\sqrt{4\pi t}} \right)e^{i\lambda\theta}d\theta=e^{-\lambda y}\int_{\theta=-\infty}^{+\infty}\left(\frac{e^{-\frac{\theta^2}{4t}}}{\sqrt{4\pi t}} \right)e^{i\lambda\theta}d\theta=e^{-\lambda y-\lambda^2 t}
\]
and
\[
\int_{\theta=-\infty}^{+\infty}\left(\frac{e^{-\frac{(\theta+iy)^2}{4t}}}{\sqrt{4\pi t}} \right)e^{i\lambda\theta}d\theta=e^{\lambda y}\int_{\theta=-\infty}^{+\infty}\left(\frac{e^{-\frac{\theta^2}{4t}}}{\sqrt{4\pi t}} \right)e^{i\lambda\theta}d\theta=e^{\lambda y-\lambda^2 t}.
\]
Hence
\begin{eqnarray*}
 (h_t\ast f)(0,0) &= & \frac{4\pi^n }{\Gamma(n)}e^{-\lambda^2 t}\int_{r>0}\int_{y>0}q_t(\cosh r\cosh y)g(r)\cosh(\lambda y)(\sinh r)^{2n-1}\cosh rdy dr \\
&=& e^{-\lambda^2 t}\int_{r>0}\int_{y=-\infty}^{+\infty}q_t(\cosh r\cosh y)l(r,y)d\mu_r \\
&=& e^{-\lambda^2 t}(e^{t\tilde{\Delta}}l)(0,0),
\end{eqnarray*}
where $l(r,y)=g(r)\cosh (\lambda y)$. Therefore we have that
\[
\lim_{t\to0}(h_t\ast f)(0,0)=l(0,0)=g(0)=f(0,0).
\]
Thus we can conclude that $h_t(r, \theta)$ is the desired subelliptic heat kernel.
\end{proof}

From \eqref{eq8} we can obtain the subelliptic heat kernel on $\bH^{2n+1}$ immediately. 
\begin{prop}
For $t>0$, $r\in[0,+\infty)$, $ \theta\in[-\pi,\pi]$, the subelliptic heat kernel on  $\bH^{2n+1}$ is given by
\begin{equation}\label{eqpt}
p_t^{\bH^{2n+1}}(r, \theta)=\frac{\Gamma(n+1)e^{-n^2t+\frac{\pi^2}{4t}}}{(2\pi)^{n+2} t}\sum_{k\in\mathbb{Z}}\int_{-\infty}^{+\infty}\int_0^{+\infty}\frac{e^{\frac{(y-i\theta-2k\pi i)^2-u^2}{4t}}\sinh u\sin\left(\frac{\pi u}{2t}\right)}{\left(\cosh u+\cosh r\cosh y\right)^{n+1}}du
dy.
\end{equation}
\end{prop}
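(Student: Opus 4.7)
The plan is to obtain $p_t^{\bH^{2n+1}}$ from $p_t^{\widetilde{\bH^{2n+1}}}$ via the method of images. Recall that $\bH^{2n+1}$ is the quotient of $\widetilde{\bH^{2n+1}}$ by the action of $2\pi\mathbb{Z}$ generated by the deck transformation $\theta \mapsto \theta + 2\pi$, which preserves the radial variable $r$ and is an isometry of the sub-Riemannian structure. Consequently, the projection $\pi: \widetilde{\bH^{2n+1}} \to \bH^{2n+1}$ intertwines the sub-Laplacians, and the standard covering-space argument yields
\[
p_t^{\bH^{2n+1}}(r,\theta) \;=\; \sum_{k\in\mathbb{Z}} p_t^{\widetilde{\bH^{2n+1}}}(r, \theta+2k\pi).
\]

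First I would verify that the right-hand side defines a smooth, $2\pi$-periodic function in $\theta$ which solves the heat equation $\partial_t u = \tilde{L} u$ on $(0,\infty)\times[-\pi,\pi]$ with the correct initial condition: the heat equation on the quotient lifts to the heat equation on the covering, so each summand solves the equation and periodicity is clear by the index shift $k \mapsto k-1$. For the initial condition, one checks (as in the proof of Proposition \ref{prop1}, by testing against functions of the form $e^{i\lambda\theta} g(r)$ with $\lambda \in \mathbb{Z}$ to respect periodicity) that the convolution with the sum converges to the identity as $t \to 0$; the terms with $k \neq 0$ decay like $e^{-k^2\pi^2/t}$ and hence vanish in the limit, leaving the $k=0$ contribution which is precisely the delta at the origin by Proposition \ref{prop1}.

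Once this identity is established, I would substitute the explicit integral representation from Proposition \ref{prop1} into each summand and interchange the sum over $k$ with the double integral over $(y,u)$. The exchange is justified by absolute convergence: the Gaussian factor $e^{(y-i\theta-2k\pi i)^2/4t}$ has modulus $e^{(y^2-(\theta+2k\pi)^2)/4t}$, so the sum over $k$ converges uniformly on compacts in $(r,\theta,t)$ once the $y$-decay of $q_t(\cosh r \cosh y)$ is taken into account (this is inherited from the explicit formula \eqref{eq6}). After the interchange, one reads off the claimed formula.

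The main obstacle is the rigorous justification of the method-of-images identity: checking that the summed kernel is the correct heat kernel for the quotient requires some care in handling the $2\pi$-periodicity in the initial-condition test and controlling the tails of the sum. The interchange of sum and integral is routine once absolute convergence is established, and no new computation beyond the estimates already implicit in Proposition \ref{prop1} is needed.
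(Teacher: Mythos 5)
Your proposal is correct and follows essentially the same route as the paper: the paper also verifies that the periodized kernel satisfies the heat equation termwise and checks the initial condition by testing against $e^{i\lambda\theta}g(r)$, using precisely the identity $\sum_{k\in\mathbb{Z}}\int_{-\pi}^{\pi}\frac{e^{-\frac{(\theta+2k\pi+iy)^2}{4t}}}{\sqrt{4\pi t}}e^{i\lambda\theta}d\theta=\int_{-\infty}^{+\infty}\frac{e^{-\frac{(\theta+iy)^2}{4t}}}{\sqrt{4\pi t}}e^{i\lambda\theta}d\theta$, which is your method-of-images summation in disguise. Your write-up is somewhat more explicit about the covering-space justification and the absolute convergence needed to interchange the sum and the integral, but the underlying argument is the same.
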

\begin{proof}
Clearly $p_t^{\bH^{2n+1}}$ satisfies the heat equation $\frac{\partial }{\partial t}p_t^{\bH^{2n+1}}=\tilde{L}p_t^{\bH^{2n+1}}$. It satisfies the initial condition because 
\[
\sum_{k\in\mathbb{Z}}\int_{-\pi}^{\pi}\left(\frac{e^{-\frac{(\theta+2k\pi+iy)^2}{4t}}}{\sqrt{4\pi t}}\right)e^{i\lambda\theta}d\theta=\int_{-\infty}^{+\infty}\left(\frac{e^{-\frac{(\theta+iy)^2}{4t}}}{\sqrt{4\pi t}}\right)e^{i\lambda\theta}d\theta.
\]
\end{proof}

\subsection{Asymptotics of the subelliptic heat kernel in small time}

An important application of the integral representation of the subelliptic heat kernel is that it provides us a way to estimate the asymptotic behaviors of the subelliptic kernel as $t$ tends to $0$ (also see \cite{BB}, \cite{Bo}, \cite{BW}).  
We mainly study the small time asymptotics of the subelliptic heat kernel on the universal covering $\widetilde{\bH^{2n+1}}$, since the estimates on $\bH^{2n+1}$ will be exactly the same. This is  because  when $t$ tends to $0$, the dominating term in \eqref{eqpt} is the term of $k=0$. Therefore we shall use the notation $p_t$ for the subelliptic heat kernels of  both $\bH^{2n+1}$ and $\widetilde{\bH^{2n+1}}$ in the rest of this paper.

The strategy is to take advantage of the small time asymptotic of $q_t$ that can be easily deduced from \eqref{heat_kernel_odd}:
\begin{align}\label{eq9}
q_t(\cosh\delta')=\frac{1}{(4\pi t)^{n+\frac{1}{2}}}\left(\frac{\delta'}{\sinh\delta'}\right)^ne^{-\frac{\delta'^2}{4t}}\left(1+\left(n^2-\frac{n(n-1)(\sinh\delta'-\delta'\cosh\delta')}{\delta'^2\sinh\delta'}\right)t+O(t^2)\right).
\end{align}
The small time asymptotic of $p_t$ can then be obtained by plugging the above equation into \eqref{eq8}. However, unlike Riemannian manifolds, we shall obtain three different behaviors of the subelliptic heat kernel on $\bH^{2n+1}$: on the diagonal, on the vertical cut-locus of the north pole, and outside of the cut-locus. 

We first look at the diagonal case, when $(r,\theta)=(0,0)$. 
\begin{prop}
As $t\to 0$, 
\[
p_t(0,0)=\frac{1}{(4\pi t)^{n+1}}(A_n+B_nt+O(t^2)),
\]
where $A_n=\int_{-\infty}^{\infty}\frac{y^n}{(\sinh y)^n}dy$ and $B_n=\int_{-\infty}^{\infty}\frac{y^n}{(\sinh y)^n}\left(n^2-\frac{n(n-1)(\sinh y-y\cosh y)}{y^2\sinh y}\right)dy$.
\end{prop}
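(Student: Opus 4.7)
The plan is to start from the integral representation of the subelliptic heat kernel given in Proposition~3.1,
\[
p_t(r,\theta)=\frac{1}{\sqrt{4\pi t}}\int_{-\infty}^{+\infty}e^{\frac{(y-i\theta)^2}{4t}}\,q_t(\cosh r\cosh y)\,dy,
\]
specialize to $r=0$ and $\theta=0$, and then insert the short-time expansion~(\ref{eq9}) of the Riemannian heat kernel $q_t$. When $r=0$, the Riemannian distance from the north pole simplifies to $\delta=|y|$ since $\cosh\delta=\cosh y$, so (\ref{eq9}) reads
\[
q_t(\cosh y)=\frac{1}{(4\pi t)^{n+\frac{1}{2}}}\left(\frac{y}{\sinh y}\right)^{\!n}e^{-\frac{y^2}{4t}}\left(1+C(y)\,t+O(t^2)\right),
\]
where $C(y)=n^2-\frac{n(n-1)(\sinh y-y\cosh y)}{y^2\sinh y}$. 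Here I would note (by a quick parity check on $y\mapsto -y$) that the factor $(y/\sinh y)^n$ and $C(y)$ are well-defined even functions, so no ambiguity arises from $|y|$ versus $y$.

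Next I would plug this expansion into the integral for $p_t(0,0)$. The crucial observation is the exact cancellation of Gaussian factors: setting $\theta=0$, the factor $e^{y^2/(4t)}$ inside the integral cancels precisely against the $e^{-y^2/(4t)}$ coming from $q_t(\cosh y)$. This leaves
\[
p_t(0,0)=\frac{1}{\sqrt{4\pi t}}\cdot\frac{1}{(4\pi t)^{n+\frac{1}{2}}}\int_{-\infty}^{+\infty}\left(\frac{y}{\sinh y}\right)^{\!n}\bigl(1+C(y)\,t+O(t^2)\bigr)\,dy,
\]
which, once one pulls out constants and identifies the $t^0$ and $t^1$ coefficients, is exactly $(4\pi t)^{-(n+1)}(A_n+B_n t+O(t^2))$.

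The main technical obstacle is to justify that the remainder $O(t^2)$ in the pointwise expansion of $q_t$ can be integrated against $(y/\sinh y)^n$ to produce an integrated remainder of the same order, uniformly in $t$ near $0$. I would handle this by obtaining the expansion of $q_t(\cosh\delta)$ directly from the closed form~(\ref{heat_kernel_odd}), namely
\[
q_t(\cosh\delta)=\frac{e^{-n^2 t}}{\sqrt{4\pi t}}\left(-\frac{1}{2\pi\sinh\delta}\frac{\partial}{\partial\delta}\right)^{\!n}\!\!\bigl(e^{-\delta^2/(4t)}\bigr),
\]
which expresses $q_t e^{\delta^2/(4t)}$ as a polynomial in $1/t$ of degree $n$ whose coefficients are smooth functions of $\delta$ with explicit control. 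Combined with the exponential decay of $(y/\sinh y)^n$ as $|y|\to\infty$, this yields uniform integrable majorants for the error term and legitimizes term-by-term integration via dominated convergence. Finally, expanding $e^{-n^2 t}=1-n^2 t+O(t^2)$ accounts for the $n^2$ contribution inside $C(y)$, and collecting the coefficients of $t^0$ and $t^1$ produces $A_n$ and $B_n$ as stated.
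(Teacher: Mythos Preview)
Your proposal is correct and follows essentially the same approach as the paper: specialize the integral representation from Proposition~3.1 to $(r,\theta)=(0,0)$, plug in the small-time expansion~(\ref{eq9}) of $q_t$, and read off the coefficients after the Gaussian factors cancel. The paper's proof is in fact just these two sentences; your version is more careful in that you explicitly justify integrating the $O(t^2)$ remainder via the exact formula~(\ref{heat_kernel_odd}) and dominated convergence, a point the paper leaves implicit.
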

\begin{proof}
We know  that
\begin{eqnarray*}
p_t(0,0)= \frac{1}{\sqrt{4\pi t}}\int_{-\infty}^{\infty}e^{\frac{y^2}{4t}}q_t(\cosh y)dy.
\end{eqnarray*}
By plugging in (\ref{eq9}), we have the desired result.
\end{proof}

Now let us we consider the subelliptic heat kernel on the vertical cut-locus of $\bH^{2n+1}$ (and $\widetilde{\bH^{2n+1}}$), namely $r=0$ and $\theta\not=0$.  
\begin{prop}
For $ \theta\in\R$, $t\rightarrow 0$,
\[
p_t(0, \theta)=\frac{ \theta^{n-1}}{2^{3n}t^{2n}(n-1)!}e^{-\frac{2\pi \theta+ \theta^2}{4t}}(1+O(t))
\]
\end{prop}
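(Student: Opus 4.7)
The natural starting point is the integral representation from (\ref{eq8}) specialized to $r=0$:
\[
p_t(0,\theta)=\frac{1}{\sqrt{4\pi t}}\int_{-\infty}^{+\infty} e^{(y-i\theta)^2/(4t)}\, q_t(\cosh y)\, dy.
\]
I would feed into this the short-time expansion (\ref{eq9}); observing that $\delta=|y|$ and that $(y/\sinh y)^n$ is even in $y$, the leading behaviour of $q_t(\cosh y)$ as $t\to 0$ is $(4\pi t)^{-n-\frac{1}{2}}(y/\sinh y)^n e^{-y^2/(4t)}$. The cross term in the exponent collapses, $(y-i\theta)^2-y^2=-2iy\theta-\theta^2$, yielding
\[
p_t(0,\theta)\sim \frac{e^{-\theta^2/(4t)}}{(4\pi t)^{n+1}}\int_{-\infty}^{+\infty}\frac{y^n}{\sinh^n y}\,e^{-iy\theta/(2t)}\,dy.
\]

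The main task is then to analyse this Fourier-type integral for $\theta>0$ and $t\to 0$, which is high-frequency oscillatory. Since the integrand is meromorphic with poles of order $n$ at $y=ik\pi$ ($k\in\mathbb{Z}\setminus\{0\}$) and $e^{-iy\theta/(2t)}$ decays in the lower half plane when $\theta>0$, I would shift the contour downward past $y=-i\pi$ (and no further), picking up a single residue. The pole at $y=-i\pi$ produces the exponential factor $e^{-\pi\theta/(2t)}$; every other residue carries a strictly larger negative exponent, and the contribution of the shifted horizontal contour along $\mathrm{Im}(y)=-\pi-\varepsilon$ is absorbed into the error by standard estimates on $y^n/\sinh^n y$ at infinity.

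To compute the residue at $y=-i\pi$, use $\sinh y\sim -(y+i\pi)$ near $-i\pi$ to factor $(y+i\pi)^n\, y^n/\sinh^n y=h(y)$ with $h$ holomorphic and $h(-i\pi)=i^n\pi^n$. Applying Leibniz to $h(y)\,e^{-iy\theta/(2t)}$, the dominant contribution as $t\to 0$ comes from putting all $n-1$ derivatives on the exponential, producing the factor $(-i\theta/(2t))^{n-1}$; derivatives landing on $h$ yield terms of lower order in $1/t$. Combining signs gives
\[
\mathrm{Res}_{y=-i\pi}\!\left(\frac{y^n}{\sinh^n y}e^{-iy\theta/(2t)}\right)\sim \frac{i\,\pi^{n}\theta^{n-1}}{(n-1)!\,(2t)^{n-1}}\,e^{-\pi\theta/(2t)},
\]
and the residue theorem (with the clockwise orientation picked up by shifting down) contributes $-2\pi i$ times this residue, namely $\frac{2\pi^{n+1}\theta^{n-1}}{(n-1)!(2t)^{n-1}}\,e^{-\pi\theta/(2t)}$. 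Substituting back and simplifying the prefactor $(4\pi t)^{n+1}(2t)^{n-1}=2^{3n}\pi^{n+1}t^{2n}$, together with $e^{-\theta^2/(4t)}e^{-\pi\theta/(2t)}=e^{-(2\pi\theta+\theta^2)/(4t)}$, yields the claimed identity.

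The main obstacles I anticipate are three-fold: (i) rigorously justifying the contour shift together with the passage to the leading term of (\ref{eq9}), i.e.\ showing that the $O(t)$ correction in the expansion of $q_t(\cosh y)$ transfers through the residue computation to an $O(t)$ correction in $p_t(0,\theta)$ relative to the leading term, which requires a uniform control of the remainder in a horizontal strip; (ii) bookkeeping the various powers of $i$ in the residue so that the final sign of the Gaussian $e^{-\theta^2/(4t)}$ in $\theta$ comes out correctly; and (iii) symmetrising the statement for $\theta<0$, which requires shifting the contour upward to the pole $y=+i\pi$ instead, and presumably replacing $\theta$ by $|\theta|$ in the formula so that the exponential genuinely decays.
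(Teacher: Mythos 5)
Your proposal is correct and follows essentially the same route as the paper: reduce to the Fourier-type integral $\int y^n(\sinh y)^{-n}e^{-iy\theta/(2t)}\,dy$ via the leading term of the expansion of $q_t$, and extract the dominant contribution from the order-$n$ pole at $y=-i\pi$ by the residue theorem, with all $n-1$ Leibniz derivatives falling on the exponential (the paper sums over all poles $-k\pi i$, $k\ge 1$, and then keeps $k=1$, which is the same computation). Your observation that for $\theta<0$ one must shift upward, so that the formula really involves $|\theta|$, is a legitimate refinement the paper leaves implicit; note also the harmless slip that $(4\pi t)^{n+1}(2t)^{n-1}=2^{3n+1}\pi^{n+1}t^{2n}$, the extra $2$ being cancelled by the $2$ in your residue contribution so that the final constant is as claimed.
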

\begin{proof}
Let $ \theta\in\R$. Then
\[
p_t(0, \theta)=\frac{1}{\sqrt{4\pi t}}\int_{-\infty}^{\infty}e^{\frac{(y-i\theta)^2}{4t}}q_t(\cosh y)dy.
\]
From  (\ref{eq9}) we have
\[
q_t(\cosh y)\sim _{t\rightarrow 0}\frac{1}{(4\pi t)^{n+\frac{1}{2}}}\left(\frac{y}{\sinh y}\right)^ne^{-\frac{y^2}{4t}},
\]
hence 
\[
p_t(0, \theta) \sim_{t\rightarrow 0}  \frac{e^{-\frac{ \theta^2}{4t} }}{(4\pi t)^{n+1}}\int_{-\infty}^{\infty}\frac{y^n}{(\sinh y)^n}e^{-\frac{iy \theta}{2t}}dy  .
\]
By the residue theorem, we obtain
\begin{eqnarray*}
\int_{-\infty}^{\infty}\frac{y^n}{(\sinh y)^n}e^{-\frac{iy \theta}{2t}}dy 
&=& -2\pi i\sum_{k\in\mathbb{Z^+}} \mathbf{Res}\left( \frac{e^{-\frac{iy \theta}{2t}}y^n}{(\sinh y)^n},-k\pi i\right)\\
&=& -2\pi i\sum_{k\in\mathbb{Z^+}} \frac{1}{(n-1)!}\frac{\partial^{n-1}}{\partial y^{n-1}}\left[\frac{e^{-\frac{iy\theta}{2t}}2^ny^n(y+k\pi i)^n}{(e^y-e^{-y})^n} \right]_{y=-k\pi i}.
\end{eqnarray*}
Let $W(y)=\frac{(y+k\pi i)^n}{(e^y-e^{-y})^n}$. It is analytic around $-k\pi i$ and satisfies
\[
W(-k\pi i)=\frac{1}{(-1)^{kn}2^n}.
\]
Hence the residue $\mathbf{Res}\left( \frac{e^{-\frac{iy \theta}{2t}}y^n}{(\sinh y)^n},-k\pi i\right)$ is given by
\[
\frac{1}{(n-1)!}\frac{\partial^{n-1}}{\partial y^{n-1}}\left[e^{-\frac{iy\theta}{2t}}2^ny^nW(y) \right]_{y=-k\pi i}.
\]
This is a product of $e^{-\frac{iy\theta}{2t}}$ and a polynomial of degree $n-1$ in $1/t$. We are only interested in the leading term, which plays the dominating role when $t\rightarrow 0$. Thus we have the following estimate:
\[
\frac{-2\pi i}{(n-1)!}\frac{\partial^{n-1}}{\partial y^{n-1}}\left[e^{-\frac{iy\theta}{2t}}2^ny^nW(y) \right]_{y=-k\pi i}
\sim_{t\rightarrow 0}
\frac{(-1)^{kn+n}\pi^{n+1}\theta^{n-1}}{(n-1)!2^{n-2}t^{n-1}}e^{-\frac{k\pi \theta}{2t}}.
\]
We can conclude that
\[ 
p_t(0, \theta)\sim_{t\rightarrow 0} \frac{e^{-\frac{ \theta^2}{4t} }}{(4\pi t)^{n+1}}\sum_{k\in\mathbb{Z}^+}\frac{(-1)^{kn+n}\pi^{n+1} \theta^{n-1}}{(n-1)!2^{n-2}t^{n-1}}e^{-\frac{k\pi \theta}{2t}};
\]
that is
\[ 
p_t(0, \theta)=\frac{ \theta^{n-1}}{2^{3n}t^{2n}(n-1)!}e^{-\frac{2\pi \theta+\theta^2}{4t}}(1+O(t)).
\]
\end{proof}

Next we consider the points that do not lie on the vertical cut-locus, i.e., $r\not=0$. First let us restrict to the horizontal plane $\{(r,0)$, $r>0\}$.
\begin{prop}
For $r\in(0,+\infty)$,  we have that
\[
p_t(r,0)= \frac{e^{-\frac{r^2}{4t}}}{(4\pi t)^{n+\frac{1}{2}}}\left(\frac{r}{\sinh r}\right)^n\sqrt{\frac{1}{r\coth r-1}}(1+O(t)).
\]
\end{prop}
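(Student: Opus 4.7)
\emph{Proof proposal.} The plan is to apply the Laplace method to the integral representation
\[
p_t(r,0) = \frac{1}{\sqrt{4\pi t}} \int_{-\infty}^{+\infty} e^{y^2/(4t)}\, q_t(\cosh r \cosh y)\, dy
\]
of Proposition \ref{prop1}. First, substitute the asymptotic (\ref{eq9}) with $\delta(y) := \mathrm{arccosh}(\cosh r \cosh y)$, so that the two Gaussian factors merge into $e^{-\phi(y)/4t}$, where
\[
\phi(y) := \delta(y)^2 - y^2,
\]
and the amplitude becomes $G(y) := (\delta(y)/\sinh\delta(y))^n$ (modulo a $1+O(t)$ correction).

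Next, analyze the phase $\phi$. Differentiating $\cosh\delta = \cosh r \cosh y$ gives $\delta'(y) = \cosh r \sinh y/\sinh\delta$, so $\phi'(0) = 0$ and $\phi(0) = r^2$. Expanding the relation $\cosh\delta = \cosh r \cosh y$ to second order around $y=0$ produces $\delta^2 = r^2 + (r\coth r)\, y^2 + O(y^4)$, hence
\[
\phi''(0) = 2(r\coth r - 1).
\]
Since $r\cosh r - \sinh r > 0$ for $r > 0$ (the derivative equals $r\sinh r \ge 0$ and both sides vanish at $r=0$), $\phi''(0) > 0$, so $y=0$ is a non-degenerate local minimum. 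To apply Laplace globally we need it to be the unique global minimum of $\phi$; this is equivalent to the strict inequality $\cosh r \cosh y > \cosh\sqrt{r^2+y^2}$ for $y\neq 0$, which can be verified by comparing coefficients of the power series in $y^2$, or alternatively via the evenness of both sides, the positive second-order coefficient at $y=0$, and the elementary fact that their ratio tends to $\cosh r > 1$ as $|y|\to\infty$.

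The Laplace method then yields
\[
\int_{-\infty}^{+\infty} G(y)\, e^{-\phi(y)/4t}\, dy \sim G(0)\sqrt{\frac{8\pi t}{\phi''(0)}}\, e^{-r^2/4t} = \left(\frac{r}{\sinh r}\right)^n \sqrt{\frac{4\pi t}{r\coth r - 1}}\, e^{-r^2/4t},
\]
and multiplication by the prefactor $\frac{1}{\sqrt{4\pi t}(4\pi t)^{n+1/2}}$ produces the claimed equivalent. The main technical obstacle is to handle the $O(t)$ remainder in (\ref{eq9}) rigorously, so that its contribution is genuinely subdominant after integration: this is done by splitting the integration into a small neighborhood of $y=0$ where Taylor control is uniformly available, and its complement where a lower bound $\phi(y) \ge r^2 + \kappa$ for some $\kappa > 0$ yields exponential decay of $e^{-\phi/4t}$ relative to the leading factor $e^{-r^2/4t}$.
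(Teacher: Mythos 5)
Your proposal follows exactly the paper's route: insert the small-time expansion (\ref{eq9}) into the integral formula of Proposition \ref{prop1}, identify the phase $f(y)=(\cosh^{-1}(\cosh r\cosh y))^2-y^2$ with minimum $r^2$ at $y=0$ and $f''(0)=2(r\coth r-1)$, and conclude by the Laplace method. Your additional verifications (that $r\coth r>1$, that $y=0$ is the unique global minimum via $\cosh r\cosh y>\cosh\sqrt{r^2+y^2}$, and the control of the remainder) are correct and in fact supply details the paper leaves implicit.
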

\begin{proof}
By proposition \ref{prop1},
\[
p_t(r,0)=\frac{1}{\sqrt{4\pi t}}\int_{-\infty}^{\infty}e^{\frac{y^2}{4t}}q_t(\cosh r\cosh y)dy.
\]
Together with (\ref{eq9}), we have
\[
p_t(r,0)\sim_{t\rightarrow 0}\frac{1}{(4\pi t)^{n+1}}\int_{-\infty}^{+\infty}e^{-\frac{(\cosh^{-1} (\cosh r\cosh y))^2-y^2}{4t}}
\left(\frac{\cosh^{-1} (\cosh r\cosh y)}{\sqrt{\cosh^2r\cosh^2y-1}} \right)^ndy.
\]
We can analyze it by the Laplace method. Notice that on $\R$, the function
\[
f(y)=(\cosh^{-1} (\cosh r\cosh y))^2-y^2
\]
has a unique minimum at $y=0$ where $f(0)=r^2$ and $f^{''}(0)=2(r\coth r-1)$.
Hence by the Laplace method, we can easily obtain the result.
\end{proof}

To extend the result to the case  $\theta\not=0$, $r\not=0$, we apply similar ideas and use the steepest descent method. 
\begin{lemma}
For $r\in(0,+\infty)$, $\theta\in(-\infty,+\infty)$, 
\[
f(y)=(\cosh^{-1} (\cosh r\cosh y))^2-(y-i\theta)^2
\]
defined on the strip $\{|\mathbf{Im}(y)|<\arccos\left( \frac{1}{\cosh r}\right)\}$ has a critical point at $i\varphi(r,\theta)$, where $\varphi(r,\theta)$ is the unique solution in $\left(-\arccos\left( \frac{1}{\cosh r}\right),\arccos\left( \frac{1}{\cosh r}\right)\right)$ to the equation
\begin{equation}\label{eq-varphi-r-theta}
\varphi(r,\theta)-\theta=\cosh r\sin\varphi(r,\theta)\frac{\cosh^{-1}(\cosh r\cos \varphi(r,\theta))}{\sqrt{\cosh^2r\cos^2\varphi(r,\theta)-1}}.
\end{equation}
\end{lemma}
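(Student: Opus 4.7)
The plan is to compute $f'(y)$ by the chain rule, locate the critical point at a purely imaginary argument $y=i\varphi$, and prove existence and uniqueness of $\varphi$ via a monotonicity argument for a one-variable real function. By the chain rule,
\[
f'(y) = \frac{2\cosh^{-1}(\cosh r \cosh y)\,\cosh r \sinh y}{\sqrt{\cosh^2 r \cosh^2 y - 1}} - 2(y-i\theta).
\]
On the open strip $|\mathrm{Im}(y)| < \arccos(1/\cosh r)$, writing $y=x+i\alpha$ one checks that the imaginary part $\cosh r \sinh x \sin\alpha$ of $\cosh r \cosh y$ vanishes only when $x=0$ or $\alpha=0$, and in either case the real part $\cosh r \cosh x \cos\alpha$ is strictly greater than $1$. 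Thus $\cosh r \cosh y$ avoids the branch cut $(-\infty,1]$ of $\cosh^{-1}$ and $\sqrt{\cdot^2-1}$, so $f$ extends holomorphically to the full strip.

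Substituting $y=i\varphi$ with $\varphi\in(-\arccos(1/\cosh r),\arccos(1/\cosh r))$, using $\cosh(i\varphi)=\cos\varphi$ and $\sinh(i\varphi)=i\sin\varphi$, the condition $f'(i\varphi)=0$ becomes
\[
\frac{\cosh^{-1}(\cosh r \cos\varphi)\,\cosh r\,(i\sin\varphi)}{\sqrt{\cosh^2 r\cos^2\varphi-1}} = i(\varphi-\theta),
\]
since on this interval $\cosh r \cos\varphi > 1$ and both $\cosh^{-1}$ and the square root take their real principal values. Dividing by $i$ yields exactly the transcendental equation of the lemma.

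For the existence-and-uniqueness part, set
\[
g(\varphi) = \cosh r\sin\varphi \cdot \frac{\cosh^{-1}(\cosh r\cos\varphi)}{\sqrt{\cosh^2 r \cos^2\varphi-1}}, \qquad h(\varphi)=\varphi-g(\varphi),
\]
so that the equation reads $h(\varphi)=\theta$. Both $g$ and $h$ are odd and smooth on the interval, with $g(0)=0$ and (using $\cosh^{-1}(\cosh r)=r$, $\sqrt{\cosh^2 r-1}=\sinh r$) a direct computation gives $g'(0) = r\coth r > 1$ for $r>0$, so $h'(0)<0$. The strategy is then to promote this to the global inequality $g'(\varphi)>1$ throughout the interval, which makes $h$ strictly decreasing and hence a monotone bijection onto its image; in particular the solution $\varphi(r,\theta)$ is unique whenever it exists. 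A convenient route is the substitution $u=\cosh r\cos\varphi\in(1,\cosh r]$, which recasts $g$ as $\pm\sqrt{\cosh^2 r-u^2}\cdot\cosh^{-1}(u)/\sqrt{u^2-1}$ and reduces the monotonicity claim to a one-variable sign analysis of the elementary function $u\mapsto \cosh^{-1}(u)/\sqrt{u^2-1}$ and its derivative.

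The main obstacle is precisely this global bound $g'(\varphi)>1$. The endpoints are reassuring: $g'(0)=r\coth r>1$ and a short expansion near the boundary gives $g'(\varphi)\to 1+\tfrac{1}{3}\sinh^2 r$ as $\varphi\to\arccos(1/\cosh r)^-$, both strictly above $1$. Establishing the inequality uniformly in between is not conceptually deep but does require a careful, somewhat lengthy derivative computation, or alternatively a geometric argument exploiting the interpretation of the relation as the shooting equation for the sub-Riemannian geodesics of $\widetilde{\bH^{2n+1}}$ linking the horizontal displacement $r$, the vertical displacement $\theta$, and the angular parameter $\varphi$. Once monotonicity is in hand, the intermediate value theorem yields uniqueness (and existence on the range of $h$), concluding the proof.
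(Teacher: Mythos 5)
Your setup is correct and follows the same route as the paper: compute $f'$, check holomorphy on the strip, substitute $y=i\varphi$, and reduce the lemma to strict monotonicity of $h(\varphi)=\varphi-g(\varphi)$. Those parts all check out, and your boundary values $g'(0)=r\coth r$ and $g'(\varphi)\to 1+\tfrac{1}{3}\sinh^2 r$ at the endpoint are both correct. The problem is that the crux of the lemma --- the global inequality $g'(\varphi)>1$ on the whole interval, which is the only thing that makes $h$ injective --- is exactly the step you defer as ``a careful, somewhat lengthy derivative computation.'' Verifying the sign of $h'$ at $\varphi=0$ and at the boundary does not exclude a sign change in between, so as written the proof has a genuine gap precisely where the content of the lemma lies.

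The missing computation is in fact short and gives a clean closed form; your substitution $u=\cosh r\cos\varphi$ is the right move, you just need to carry it through. Using $\frac{du}{d\varphi}=-\cosh r\sin\varphi$ and $\cosh^2 r\sin^2\varphi=\cosh^2 r-u^2$, and writing $\Phi(u)=\cosh^{-1}(u)/\sqrt{u^2-1}$, one finds $g'(\varphi)=u\Phi(u)-(\cosh^2 r-u^2)\Phi'(u)$, and after simplification
\[
h'(\varphi)=1-g'(\varphi)=\frac{\sinh^2 r}{u^2-1}\left(1-\frac{u\cosh^{-1}u}{\sqrt{u^2-1}}\right),
\]
which is strictly negative for all $u\in(1,\cosh r]$ because, setting $u=\cosh s$ with $s>0$, the factor $\frac{u\cosh^{-1}u}{\sqrt{u^2-1}}=\frac{s}{\tanh s}$ exceeds $1$. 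This is exactly the paper's argument, so once you supply this identity your proof coincides with it. One further caution: since $\Phi(u)\to 1$ as $u\to 1^+$, one has $g(\varphi)\to\pm\sinh r$ at the endpoints of the interval, so $h$ maps the interval onto a \emph{bounded} range; monotonicity therefore yields uniqueness, but existence of $\varphi(r,\theta)$ for arbitrary $\theta\in\R$ does not follow from this argument (the paper's own proof likewise only concludes uniqueness), so your phrasing ``existence on the range of $h$'' is the honest and correct scope of the conclusion.
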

\begin{proof}
Let $u=\cosh r\cos\varphi$. Notice that
\[
\frac{\partial}{\partial\varphi}\left(\varphi-\cosh r\sin\varphi\frac{\cosh^{-1}(\cosh r\cos\varphi)}{\sqrt{\cosh^2 r\cos^2\varphi-1}}\right)=\frac{\sinh^2 r}{u(r,\theta)^2-1}\left(1-\frac{u(r,\theta)\cosh^{-1} u(r,\theta)}{\sqrt{u^2(r,\theta)-1}}
\right).
\]
Since the right hand side of the above equation is negative,  we know that the map $\theta\rightarrow\varphi-\cosh r\sin\varphi\frac{\cos^{-1}(\cosh r\cos\varphi)}{\sqrt{\cosh^2 r\cos^2\varphi-1}}$ strictly decreases on $(-\infty, +\infty)$. Hence the solution is unique.
\end{proof}

\begin{prop}
Let  $r\in(0,+\infty)$, $\theta\in(-\infty,+\infty)$. When $t\to 0$,
\[
p_t(r,\theta)=\frac{1}{(4\pi t)^{n+\frac{1}{2}}\sinh r}\frac{(\cosh^{-1} u(r,\theta))^n}{\sqrt{\frac{u(r,\theta)\cosh^{-1} u(r,\theta)}{\sqrt{u^2(r,\theta)-1}}-1}}\frac{e^{-\frac{(\varphi(r,\theta)-\theta)^2\tanh^2 r}{4t\sin^2(\varphi(r,\theta))}}}{(u(r,\theta)^2-1)^{\frac{n-1}{2}}}(1+O(t)),
\]
where $u(r,\theta)=\cosh r\cos\varphi(r,\theta)$.
\end{prop}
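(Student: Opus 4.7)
The plan is to apply the complex Laplace (steepest descent) method to the integral representation from Proposition~\ref{prop1}, in the spirit of the proof for $p_t(r,0)$ but with the contour shifted to a nontrivial saddle. First, substitute the leading-order asymptotic~(\ref{eq9}) of the Riemannian heat kernel into
\[
p_t(r,\theta)=\frac{1}{\sqrt{4\pi t}}\int_{-\infty}^{+\infty}e^{(y-i\theta)^{2}/(4t)}\,q_t(\cosh r\cosh y)\,dy,
\]
so that it reduces to
\[
p_t(r,\theta)\sim\frac{1}{(4\pi t)^{n+1}}\int_{-\infty}^{+\infty}e^{-f(y)/(4t)}g(y)\,dy,
\]
with $f(y)=\delta(y)^{2}-(y-i\theta)^{2}$, $\delta(y)=\cosh^{-1}(\cosh r\cosh y)$, and $g(y)=(\delta(y)/\sinh\delta(y))^{n}$.

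Next, deform the real line to the horizontal contour $\mathbb{R}+i\varphi(r,\theta)$ inside the holomorphy strip from the preceding lemma; by construction $i\varphi$ is exactly the saddle point $f'=0$, and the lemma provides its uniqueness. The real positivity of $f''(i\varphi)$ recorded just before the statement certifies that this contour is already the direction of steepest descent, so the standard Laplace approximation yields
\[
\int_{\mathbb{R}+i\varphi}e^{-f(y)/(4t)}g(y)\,dy\sim g(i\varphi)\sqrt{\frac{8\pi t}{f''(i\varphi)}}\,e^{-f(i\varphi)/(4t)}.
\]
Plugging in $g(i\varphi)=(\cosh^{-1}u/\sqrt{u^{2}-1})^{n}$ and the explicit value of $f''(i\varphi)$, and absorbing the $(4\pi t)^{-(n+1)}$ prefactor, reproduces every algebraic factor in the claim.

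The final step is to simplify the exponent. Squaring the saddle-point equation gives $(\cosh^{-1}u)^{2}=(\varphi-\theta)^{2}(u^{2}-1)/(\cosh^{2}r\sin^{2}\varphi)$, hence
\[
f(i\varphi)=(\cosh^{-1}u)^{2}+(\varphi-\theta)^{2}=\frac{(\varphi-\theta)^{2}\bigl(u^{2}-1+\cosh^{2}r\sin^{2}\varphi\bigr)}{\cosh^{2}r\sin^{2}\varphi}=\frac{(\varphi-\theta)^{2}\tanh^{2}r}{\sin^{2}\varphi},
\]
using the identity $u^{2}-1+\cosh^{2}r\sin^{2}\varphi=\cosh^{2}r-1=\sinh^{2}r$. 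This yields the exponential factor in the target expression.

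The main obstacle is the rigorous justification of the contour shift and the uniform validity of the Laplace asymptotic. One has to verify that the integrand decays along the vertical sides of the enclosing rectangle (so the two horizontal contours are indeed equal), that $\mathrm{Re}\,f(a+i\varphi)-f(i\varphi)$ grows at least quadratically in $|a|$ along the shifted line so that the Gaussian localization is genuine, and that the $O(t)$ correction from~(\ref{eq9}) contributes only at subleading order after integration against $e^{-f/(4t)}$. These are standard technical verifications, made possible by the fact that $\varphi(r,\theta)$ stays strictly inside the holomorphy strip $|\mathrm{Im}\,y|<\arccos(1/\cosh r)$ identified in the preceding lemma.
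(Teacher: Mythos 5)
Your proposal is correct and follows exactly the route the paper intends: the paper omits the computation, merely invoking the steepest descent method after establishing the lemma on the saddle point $i\varphi(r,\theta)$ and the positivity of $f''(i\varphi)$, and your argument fills in precisely those details (the Laplace approximation at the shifted contour, the evaluation of $g(i\varphi)$ and $f''(i\varphi)$, and the simplification of $f(i\varphi)$ via the saddle equation and the identity $u^2-1+\cosh^2 r\sin^2\varphi=\sinh^2 r$). All the algebra checks out against the stated formula.
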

\begin{proof}
From \eqref{eq8} and \eqref{eq9}, we know that
\[
p_t(r,\theta)\sim_{t\to0}\frac{1}{(4\pi t)^{n+1}}\int_{-\infty}^{+\infty}e^{-\frac{ f(y)}{4t} }g(y)dy,
\]
where $f(y)=(\cosh^{-1} (\cosh r\cosh y))^2-(y-i\theta)^2$, $g(y)=\left(\frac{\delta(y)}{\sinh\delta(y)}\right)^n$ and $\delta(y)=\cosh^{-1}(\cosh r\cosh y)$. Since $\delta(y)\ge\cosh^{-1}(\cosh r)=r>0$, we know that $e^{-\frac{ f(y)}{4t} }g(y)$ is analytic. Therefore for any $M>0$,
\[
\frac{1}{(4\pi t)^{n+1}}\int_{-\infty}^{+\infty}e^{-\frac{ f(y)}{4t} }g(y)dy=\frac{1}{(4\pi t)^{n+1}}\int_{-M}^{M}e^{-\frac{ f(y+i\varphi(r,\theta))}{4t} }g(y+i\varphi(r,\theta))dy+\ell_1-\ell_2,
\]
where $\ell_1=\frac{1}{(4\pi t)^{n+1}}\int_{0}^{\varphi(r,\theta)}e^{-\frac{ f(M+iy)}{4t} }g(M+iy)dy$ and $\ell_2=\frac{1}{(4\pi t)^{n+1}}\int_{0}^{\varphi(r,\theta)}e^{-\frac{ f(-M+iy)}{4t} }g(-M+iy)dy$. Simple calculations show that $\ell_1\to0$ and $\ell_2\to0$ as $M\to+\infty$.  Thus
\[
\frac{1}{(4\pi t)^{n+1}}\int_{-\infty}^{+\infty}e^{-\frac{ f(y)}{4t} }g(y)dy=\frac{1}{(4\pi t)^{n+1}}\int_{-\infty}^{+\infty}e^{-\frac{ f(y+i\varphi(r,\theta))}{4t} }g(y+i\varphi(r,\theta))dy.
\]
Moreover, at $i\varphi(r,\theta)$ we have
\begin{equation}\label{eq-f-dp}
f^{''}(i\varphi(r,\theta))=\frac{2\sinh^2 r}{u(r,\theta)^2-1}\left(\frac{u(r,\theta)\cosh^{-1} u(r,\theta)}{\sqrt{u^2(r,\theta)-1}}-1 \right),
\end{equation}
where $u(r,\theta)=\cosh r\cos\varphi(r,\theta)$. $f^{''}(i\varphi(r,\theta))$ is a real positive number. By the steepest descent method, we have that 
\begin{equation}\label{eq-pt-sim-sd}
\int_{-\infty}^{+\infty}e^{-\frac{ f(y)}{4t} }g(y)dy\sim_{t\to0}\sqrt{4\pi t}e^{-\frac {f(i\varphi(r,\theta))}{4t}}g(i\varphi(r,\theta))\sqrt{\frac{2}{f''(i\varphi(r,\theta))}}.
\end{equation}
By plugging in \eqref{eq-varphi-r-theta}, we have 
\begin{equation}\label{eq-f-i-varphi}
f(i\varphi(r,\theta))=(\cosh^{-1}(\cosh y\cos \varphi))^2-(\varphi-\theta)^2=\frac{(\varphi(r,\theta)-\theta)^2\tanh^2 r}{\sin^2(\varphi(r,\theta))},
\end{equation}
and since $\delta(i\varphi(r,\theta))=\cosh^{-1}(\cosh r\cos \varphi(r,\theta))=\cosh^{-1}u(r,\theta)$, we have
\begin{equation}\label{eq-g-i-varphi}
g(i\varphi(r,\theta))=\left(\frac{\delta(i\varphi(r,\theta))}{\sinh\delta(i\varphi(r,\theta))}\right)^n=\left(\frac{\cosh^{-1}u(r,\theta)}{\sqrt{u^2(r,\theta)-1}}\right)^n.
\end{equation}
By putting \eqref{eq-f-dp},  \eqref{eq-pt-sim-sd}, \eqref{eq-f-i-varphi} and \eqref{eq-g-i-varphi} together we have the conclusion. 
\end{proof}

At the end of this paper we present a by-product of the small time asymptotic estimations of the subelliptic heat kernels, that is,  the sub-Riemannian distance on $\bH^{2n+1}$ and $\widetilde{\bH^{2n+1}}$ by applying L\'eandre's result (see \cite{Leandre}). We remark it as below.
\begin{remark}
By radial symmetry, the sub-Riemannian distance from the north pole to any point on $\widetilde{\bH^{2n+1}}$ only depends on $r$ and $\theta$. If we denote it by $d(r,\theta)$, then from the previous propositions,
\item[(1)]
For $\theta\in \R$,
\[
d^2(0,\theta)=2\pi |\theta|+\theta^2;
\]
\item[(2)]
For $\theta\in \R$, $r\in\left(0,+\infty\right)$,
\[
d^2(r,\theta)=\frac{(\varphi(r,\theta)-\theta)^2\tanh^2 r}{\sin^2(\varphi(r,\theta))}.
\]
The above formulas also work for $\bH^{2n+1}$ if we restrict $\theta$ to $[-\pi,\pi]$.
\end{remark}
\textbf{Acknowledgements:} The author wishes to thank Prof. Fabrice Baudoin for  his constant interest and the many helpful conversations.

\end{document}